\newcommand{\BEAS}{\begin{eqnarray*}}
\newcommand{\EEAS}{\end{eqnarray*}}
\newcommand{\BEQ}{\begin{equation}}
\newcommand{\EEQ}{\end{equation}}
\newcommand{\BIT}{\begin{itemize}}
\newcommand{\EIT}{\end{itemize}}
\newcommand{\eg}{{\rm e.g.}}
\newcommand{\ie}{{\rm i.e.}}
\newcommand{\cf}{{\rm cf. }}
\newcommand{\ones}{\mathbf 1}
\newcommand{\reals}{{\mbox{\bf R}}}
\newcommand{\symm}{{\mbox{\bf S}}}
\newcommand{\dist}{\mathop{\bf dist{}}}
\newcommand{\argmin}{\mathop{\rm argmin}}
\newcommand{\range}{\mathop{\bf range}}
\newcommand{\relint}{\mathop{\bf relint}}
\newcommand{\dom}{\mathop{\bf dom}}
\newcommand{\prox}{{\mathop{\textbf{prox}}}}
\long\def\@makecaption#1#2{
   \vskip 9pt
   \begin{small}
   \setbox\@tempboxa\hbox{{\bf #1:} #2}
   \ifdim \wd\@tempboxa > 5.5in
        \begin{center}
        \begin{minipage}[t]{5.5in}
        \addtolength{\baselineskip}{-0.95pt}
        {\bf #1:} #2 \par
        \addtolength{\baselineskip}{0.95pt}
        \end{minipage}
        \end{center}
   \else
        \hbox to\hsize{\hfil\box\@tempboxa\hfil}
   \fi
   \end{small}\par
}
\newcounter{oursection}
\newcounter{lecture}
\long\def\@makecaption#1#2{%
    \footnotesize
    \setlength{\parindent}{1.5pc}
  \ifx\@captype\@figtxt
    \vskip\abovecaptionskip
    \setbox\@tempboxa\hbox{{\normalfont\scshape #1}. {\normalfont\itshape #2}}%
    \ifdim \wd\@tempboxa >\hsize
      {\normalfont\scshape #1}. {\normalfont\itshape #2}\par
    \else
      \global\@minipagefalse
      \hb@xt@\hsize{\hfil\box\@tempboxa\hfil}%
    \fi
  \else
    \hbox to\hsize{\hfil{\normalfont\scshape #1}\hfil}%
    \setbox\@tempboxa\hbox{{\normalfont\itshape #2}}%
    \ifdim \wd\@tempboxa >\hsize
      {\normalfont\itshape #2}\par
    \else
     \global\@minipagefalse
      \hb@xt@\hsize{\hfil\box\@tempboxa\hfil}%
    \fi
    \vskip\belowcaptionskip
  \fi}
\title{Anderson Accelerated Douglas--Rachford Splitting\thanks{%Submitted to the journal's Methods and Algorithms for Scientific Computing 
%section September 27, 2019; accepted for publication (in revised form) July 31, 2020; published electronically November 9, 2020. 
%*
Anqi Fu and 
Junzi Zhang contributed equally to this work.\URL{10.1137/19M1290097}}\funding{The work of the first and second authors was each supported by a Stanford Graduate Fellowship.}}%}
\author{Anqi Fu\thanks{Department of Electrical Engineering, Stanford University, Stanford, CA 94305 USA
(\href{mailto:anqif@stanford.edu}{anqif@} \href{mailto:anqif@stanford.edu}{stanford.edu}, 
\email{boyd@stanford.edu}).}\orcid{https://orcid.org/0000-0002-2876-2942}
\and Junzi Zhang\thanks{ICME, Stanford University, Stanford, CA 94305 USA (\email{junziz@stanford.edu}).}\orcid{https://orcid.org/0000-0002-5086-0063}
\and Stephen Boyd\footnotemark[2]}
\begin{document}

%siam_id=M129009
%CODEN=SJOCE3
%\slugger{sisc}{2020}{42}{6}{A3560--A3583}
\maketitle

\begin{abstract}
We consider the problem of nonsmooth convex optimization with linear equality constraints, 
where the objective function is only accessible through its proximal operator. This 
problem arises in many different fields such as statistical learning, computational 
imaging, telecommunications, and optimal control. To solve it, we propose an Anderson 
accelerated Douglas--Rachford splitting (A2DR) algorithm, which we show either globally 
converges or provides a certificate of infeasibility/unboundedness under very mild 
conditions. Applied to a block separable objective, A2DR partially decouples so that 
its steps may be carried out in parallel, yielding an algorithm that is fast and scalable 
to multiple processors.  We describe an open-source implementation and demonstrate its 
performance on a wide range of examples.
\end{abstract}

\begin{keywords}
Anderson acceleration,  nonsmooth convex optimization, parallel and distributed optimization,
proximal oracles, stabilization, global convergence, pathological settings
\end{keywords}

\begin{AMS}
49J52, 65K05, 68W10, 68W15, 90C25, 90C53, 97N80
\end{AMS}

\begin{DOI}
10.1137/19M1290097
\end{DOI}

\section{Introduction}\label{intro}
\subsection{Problem setting}
Consider the convex optimization problem
\begin{equation}\label{general_n}
\begin{array}{ll}
\text{minimize} & f(x)\\
\text{subject to} & Ax=b
\end{array}
\end{equation}
with variable $x \in \reals^n$, where $f:\reals^n\rightarrow\reals
\cup\{+\infty\}$ is convex, closed, and proper (CCP), and $A \in 
\reals^{m\times n}$ and $b\in\reals^m$ are given. We assume that 
the linear constraint $Ax=b$ is feasible.

\paragraph{Block form} In this paper, we work with block separable 
$f$, i.e., $f(x)=\sum_{i=1}^Nf_i(x_i)$ for individually CCP $f_i:
\reals^{n_i}\rightarrow\reals\cup\{+\infty\}$, $i=1,\dots,N$. We 
partition $x=(x_1,\dots,x_N)$ so that $n=\sum_{i=1}^Nn_i$ and let 
$A=[A_1~A_2~\cdots~A_N]$ with $A_i\in\reals^{m\times n_i}$, $i=1,
\dots,N$. Problem (\ref{general_n}) can be written in terms of the
block variables as
\begin{equation}\label{general}
\begin{array}{ll}
\text{minimize} & \sum_{i=1}^Nf_i(x_i)\\
\text{subject to} & \sum_{i=1}^NA_ix_i=b.
\end{array}
\end{equation}
Many interesting problems have the form (\ref{general}), 
such as consensus optimization \cite{ADMM} and cone programming 
\cite{SCS}. In fact, by transforming nonlinear convex constraints 
(e.g., cone constraints) into set indicator functions and adding 
them to the objective function, any convex optimization problem 
can be written in the above form.

\paragraph{Optimality conditions}  
The point $x \in \reals^n$ is a solution to (\ref{general}) if there 
exist $g\in\reals^n$ and  $\lambda\in\reals^m$ such that 
%\label{optimality}
\begin{equation}\label{primal}
Ax=b,
\end{equation}
\begin{equation}\label{dual}
0= g+A^T\lambda,\quad g\in\partial f(x),
\end{equation}
where $\partial f(x)$ is the subdifferential of $f$ at $x$. With block 
separability, (\ref{dual}) can be written as 
\[
0=g_i+A_i^T\lambda, \quad 
g_i\in\partial f_i(x_i), \quad i=1,\dots,N.
\]
We refer to (\ref{primal}) and (\ref{dual}) as the primal feasibility and 
dual feasibility conditions, and $x$ and $\lambda$ as the primal 
variable and dual variable, respectively. Together, these conditions 
are sufficient for optimality; they become necessary 
as well when Slater's constraint qualification is satisfied, i.e., 
$\relint\dom f\cap\{x\,:\,Ax=b\}\neq \emptyset$.

\paragraph{Proximal oracle} 
Methods for solving (\ref{general}) vary depending on what oracle
is available for $f_i$.
If $f_i$ and its subgradient can be queried directly, 
a variety of iterative algorithms may be used 
\cite{BoydVandenberghe:2004, NocedalandWright:2006, Nesterov:2013}. 
However, in our setting, we assume that each $f_i$ can only be accessed 
through its proximal operator $\prox_{tf_i}:\reals^{n_i}
\rightarrow\reals^{n_i}$, defined as
\[
\prox_{tf_i}(v_i)=\argmin\nolimits_{x_i}~ \left(f_i(x_i)+
\tfrac{1}{2t}\|x_i-v_i\|_2^2\right), 
\]
where $t > 0$ is a parameter. In particular, we assume neither 
direct access to the function $f_i$ nor its subdifferential 
$\partial f_i$. The separability of $f$ implies that \cite{Proximal}
\[
\prox_{tf}(v)=\left(\prox_{tf_1}(v_1),\dots,
\prox_{tf_N}(v_N)\right)
\]
for any $v=(v_1,\dots, v_N) \in \reals^n$. 

While we cannot evaluate $\partial f_i$ at a general point,
we can find an element of $\partial f_i$ at the proximal operator's image point:
\[
x_i=\prox_{tf_i}(v_i) \Longleftrightarrow 0\in\partial 
f_i(x_i)+\tfrac{1}{t}(x_i-v_i)\Longleftrightarrow \tfrac{1}{t}(v_i-x_i)\in\partial f_i(x_i).
\] 
Thus, by querying the proximal oracle of $f_i$ at $v_i$, we obtain 
an element in the subgradient of $f_i$ at $x_i = \prox_{tf_i}(v_i)$.

The optimality conditions can be expressed using the proximal operator as 
well. The point $x \in \reals^n$ is a solution to (\ref{general}) if there exist
$v\in\reals^n$ and $\lambda\in\reals^m$ such that
\begin{equation}\label{primal_prox}
Ax=b,
\end{equation}
\begin{equation}\label{dual_prox}
0=\tfrac{1}{t}(v-x)+A^T\lambda,\quad x_i=\prox_{tf_i}(v_i),\quad i=1,\dots,N.
\end{equation}

\paragraph{Residuals} 
From conditions (\ref{primal_prox}) 
and (\ref{dual_prox}), we define
the primal and dual residuals at $x, \lambda$ as
\begin{equation}\label{primal_res}
r_{\rm{prim}}=Ax-b,
\end{equation}
\begin{equation}\label{dual_res}
r_{\rm{dual}}=\tfrac{1}{t}(v-x)+A^T\lambda,
\end{equation}
and we define the overall residual as $r=(r_{\rm prim},r_{\rm dual}) \in \reals^{n+m}$.

\paragraph{Stopping criterion} 
If problem (\ref{general}) is feasible 
and bounded, a reasonable stopping criterion is that the residual 
norm lies below some threshold, \ie, $\|r\|_2\leq \epsilon_{\rm tol}$,
where $\epsilon_{\rm tol} > 0$ is a user-specified tolerance.
We refer to the associated $x$ as an approximate solution to (\ref{general}). 
We defer discussion of the criteria for 
pathological (infeasible/unbounded) cases to section \ref{global_theory}.

Notice that given a candidate $v \in \reals^n$, we can readily choose
the primal point $x=\prox_{tf}(v)$ and dual point
\begin{equation}
\lambda=\tfrac{1}{t}(A^\dagger)^T(x-v)\in\argmin\nolimits_{\hat\lambda}~ \|A^T\hat\lambda
-\tfrac{1}{t}(x-v)\|_2,
\label{ls_lam}
\end{equation}
a minimizer of the dual residual norm, where $A^{\dagger}$ denotes the
pseudoinverse of $A$. Thus, any algorithm for solving (\ref{general}) 
via the proximal oracle need only determine a $v$ that produces a small 
residual norm.

\subsection{Related work}\label{related_work} 
When functional access is restricted to a proximal oracle, the most common approaches 
for solving (\ref{general}) are the alternating direction method of multipliers (ADMM) \cite{Distr-ADMM, BlockSplit, POGS, DarrenDistr}, 
Douglas--Rachford splitting (DRS) \cite{DRS_multiblock}, and the augmented Lagrangian method 
\cite{cons_ALM} with appropriate problem reformulations (\eg, consensus). 
These algorithms take advantage of the separability of the objective function, making them 
well-suited for the nonsmooth convex optimization problem considered in this paper. 
Yet despite their robustness and scalability, they typically suffer 
from slow convergence. Researchers have proposed several acceleration techniques, 
including adaptive penalty parameters \cite{RB2000, Xu2017b}, adaptive 
synchronization \cite{Ada_Sync}, and momentum methods \cite{HB_ODE_Distr}. 
In practice, improvement from these techniques is usually limited due to the 
first-order nature of the accelerated algorithms. Special cases of (\ref{general}) can 
sometimes yield exploitable problem forms, such as the Laplacian regularized stratified 
model in \cite{cvx_strat}. There the authors use the structure of the Laplacian matrix to 
efficiently parallelize ADMM. However, for the general problem, further acceleration requires a
quasi-Newton method with line search \cite{SuperMann} or semismooth Newton method 
with access to the Clarke's generalized Jacobian of the objective's proximal operator 
\cite{ZicoSemNewton,ZaiwenSemNewton,milzarek2019stochastic}, both of which typically 
impose high per-iteration costs and memory requirements.

The acceleration technique adopted in this paper, type-II Anderson acceleration (AA), 
dates back to the 1960s \cite{AA}. It belongs to the family of sequence acceleration methods, 
which achieve faster convergence through certain sequence transformations. The origin of 
these methods can be traced to Euler's transformation of series \cite{abramowitz1948handbook}
from the 18th century. Several faster sequence acceleration techniques were proposed in the 20th 
century, including Aitken's $\Delta^2$-process in 1926 \cite{aitken1927xxv} along with its 
higher-order \cite{shanks1955non,wynn1956device} and vector \cite{mevsina1977convergence,macleod1986acceleration,smith1987extrapolation} extensions, 
of which AA is a member. We refer readers to \cite{brezinski2018shanks,bachacceleration} for 
a thorough history. AA can be viewed as either an extrapolation method or a generalized 
quasi-Newton method \cite{FangSecant}. However, unlike classical quasi-Newton methods, it is 
effective without a line search and requires less computation and memory per iteration so long 
as certain stabilization measures are adopted. 

Type-II AA was initially proposed to accelerate solvers for nonlinear integral equations in 
computational chemistry and materials science; later, it was applied to general fixed-point 
problems \cite{WalkerNi}. It operates by using an affine combination of previous iterates to 
determine the next iterate of an algorithm, where the combination's coefficients are obtained 
by solving an unconstrained least squares problem. In this sense, it is a generalization of the 
averaged iteration algorithm and Nesterov's accelerated gradient method. Its local convergence 
properties have been analyzed in a range of settings, both deterministic 
\cite{ConvDIIS, ConvAA, RNA, sara2018, aa_cheby, aa_constr, sara2019} and stochastic 
\cite{SRNA, ConvAA_rand}, but its global convergence properties remain largely unknown 
except for a variant called EDIIS \cite{KudinScuseria:2002}. EDIIS has been shown to converge 
globally assuming that the fixed-point mapping is contractive \cite{ChenKelley:2019}. However, 
it adds nonnegativity constraints to the coefficients of AA, meaning each iteration must solve 
a nonnegative least squares problem, a more complex task than solving the unconstrained problem, 
which admits a closed-form solution. The technique proposed in this paper, by contrast, only 
requires nonexpansiveness for global convergence. Each of its iterations merely solves an 
unconstrained least squares problem, similar to the original type-II AA.
Recently, \cite{FangSecant} proposed another AA variant called type-I AA. While less stable than 
its type-II counterpart, this variant performs more favorably with appropriate stabilization and 
globalization \cite{SCS-2-1-2,AA1}.

AA has been applied in the literature to several problems 
related to (\ref{general}). The authors of \cite{AA-GP} use AA to speed up 
a parallelized local-global solver for geometry optimization and physics 
simulation problems, which may be viewed as a special case of our problem 
where $f_i$ are projection operators. In a separate setting, \cite{Parallel2}
employs AA to solve large-scale fixed-point problems arising from partial 
differential equations, demonstrating performance improvements on a 
distributed memory platform. 
More generally, \cite{AA1} uses type-I AA in conjunction with 
DRS and a splitting conic solver (SCS) \cite{SCS} to solve problems in 
consensus and conic optimization. These results are extended by \cite{SuperSCS}, 
which combines type-II AA with an SCS variant to produce SuperSCS, an 
efficient solver for large cone programs. 
AA has also seen success in nonconvex settings. Notably, \cite{aa_admm} applies AA 
to ADMM and studies its empirical performance on nonconvex optimization 
problems arising in computer graphics.

\subsection{Contribution}\label{contrib}
In this paper, we consider the DRS algorithm for solving (\ref{general}), 
which satisfies the proximal oracle assumption and admits a simple fixed-point 
(FP) formulation \cite{MonoPrimer}. This FP format allows us to improve the convergence of 
DRS with AA, a memory efficient, line search free acceleration method that works on generic 
nonsmooth, nonexpansive FP mappings with almost no extra cost per iteration \cite{AA1}. 
Motivated by the need for solver stability, we choose type-II AA in our current work and 
propose a robust stabilization scheme that maintains its speed and efficiency. 
We then apply it to DRS and show that the resulting Anderson accelerated Douglas--Rachford 
splitting (A2DR) algorithm always either converges or provides a certificate of 
infeasibility/unboundedness under very relaxed conditions. As a consequence, we obtain the 
first globally convergent type-II AA variant in nonsmooth, potentially pathological settings. 
Our convergence analysis only requires nonexpansiveness of the FP mapping,  
gracefully handling cases when a fixed-point does not exist. Finally, we release an 
open-source Python solver based on A2DR at
\smallskip
\begin{center}
\url{https://github.com/cvxgrp/a2dr}.
\end{center}
\smallskip

\paragraph{Outline} We begin in section \ref{DRS} by introducing the basics
of DRS. We then describe AA and propose A2DR in section \ref{A2DR}. The 
global convergence properties of A2DR are established in section \ref{global_theory},
along with an analysis of the infeasible and unbounded cases. We discuss the 
presolve, equilibration, and hyperparameter choices in section \ref{precond}, 
followed by the implementation details in section \ref{solver}. In 
section \ref{experiments}, we demonstrate the performance of A2DR on several examples. 
We conclude in section \ref{conclusions}.

\section{Douglas--Rachford splitting}\label{DRS}
Douglas--Rachford splitting (DRS) is an algorithm for solving 
problems of the form
\[
\mbox{minimize} \quad g(x) + h(x)
\]
with variable $x$, where $g$ and $h$ are CCP \cite{MonoPrimer}.
We can write problem (\ref{general}) in this form
by taking $g=f$ and $h=\mathcal{I}_{\{x\,:\,Ax=b\}}$, the indicator function
of the linear equality constraint.
Notice that $\prox_{t h}$ is the projection onto the associated subspace, defined as
\[
\Pi(v^{k+1/2})=v^{k+1/2}-A^{\dagger}(Av^{k+1/2}-b)=v^{k+1/2}-
A^T(AA^T)^{\dagger}(Av^{k+1/2}-b).
\]
The DRS algorithm proceeds as follows.

{\linespread{1.29}
\begin{algorithm}[h]
\caption{\rm{Douglas--Rachford Splitting (DRS)}}
\label{DRS_alg}
\begin{algorithmic}[1]
\State {\bfseries Input:} initial point $v^0$, penalty coefficient 
$t>0$. 
\For{$k=1, 2, \dots$}
\State $x^{k+1/2}=\prox_{tf}(v^k)$ 
\State $v^{k+1/2}=2x^{k+1/2}-v^k$
\State $x^{k+1}=\Pi(v^{k+1/2})$
\State $v^{k+1}=v^k+x^{k+1}-x^{k+1/2}$
\EndFor
\end{algorithmic}
\end{algorithm}
}

Each iteration $k$ requires the evaluation of the proximal operator of $f$
and the projection onto a linear subspace.

\paragraph{Dual variable and residuals}
We regard $x^{k+1/2}$, the proximal operator's image point, 
as our approximate primal optimal variable in iteration $k$.
There are two ways to produce an approximate dual variable $\lambda^k$.
The first way sets 
\[
\lambda^k = \tfrac{1}{t}(AA^T)^{\dagger}(Av^{k+1/2} - b), 
\]
an intermediate value from the projection step. (See Remark \ref{rmk_app} in 
the supplementary materials for the reasoning behind this choice.)
The second way computes $\lambda^k$ as the minimizer of $\|r_{\rm dual}^k\|_2$,
which necessitates solving the least squares problem (\ref{ls_lam}) at each
iteration.  
Our implementation uses the second
method because the additional computational cost is minimal,
and this choice of a dual optimal variable results in earlier stopping.

The primal and dual residuals can be calculated by plugging our DRS iterates
into (\ref{primal_res}) and (\ref{dual_res}):
\begin{equation}\label{primal_k}
r_{\rm prim}^k=Ax^{k+1/2}-b,
\end{equation}
\begin{equation}\label{dual_k_ls}
r_{\rm dual}^k=\tfrac{1}{t}(v^k-x^{k+1/2})+A^T\lambda^k.
\end{equation}

\paragraph{Convergence}
Define the fixed-point mapping $F_{\text{DRS}}:\reals^n\rightarrow\reals^n$ as 
\[
F_{\text{DRS}}(v)=v+\Pi\left(2\textbf{prox}_{tf}(v)-v\right)-\textbf{prox}_{tf}(v),
\]
so that $v^{k+1}=F_{\text{DRS}}(v^k)$.
It can be shown that $F_{\rm DRS}$ is $1/2$-averaged (\ie, 
$F_{\rm DRS}=\frac{1}{2}H+\frac{1}{2}I$, where $H$ is nonexpansive and $I$ is the 
identity mapping), and hence, $v^k$ converges globally and sublinearly to a 
fixed-point of $F_{\rm DRS}$ whenever such a point exists. In this case, 
$x^{k+1/2}$ and $x^{k+1}$ both converge to a solution of (\ref{general}), implying that 
$\lim_{k\rightarrow\infty} \|r_{\rm prim}^k\|_2 = \lim_{k\rightarrow\infty} 
\|r_{\rm dual}^k\|_2 = 0$ \cite{MonoPrimer}.

\section{Anderson accelerated DRS}\label{A2DR}
In this section, we give a brief overview of AA and propose a modification that improves its stability. 
We then combine stabilized AA with DRS to construct our main algorithm,
Anderson accelerated DRS. A2DR always produces 
an approximate solution to \eqref{general} when the problem is feasible 
and bounded. We treat the infeasible/unbounded cases in section \ref{global_theory}.

\subsection{Anderson acceleration} 
Consider a $1/2$-averaged mapping $F:\reals^n\rightarrow\reals^n$. 
To solve the associated fixed-point problem $F(v)=v$, we can repeatedly 
apply the fixed-point iteration (FPI) $v^{k+1}=F(v^k)$, which is exactly  
DRS when $F=F_{\rm DRS}$. However, convergence of FPI algorithms is 
usually slow in practice. Acceleration schemes are one way of 
addressing this flaw. 
AA is a special form of the generalized limited-memory quasi-Newton (LM-QN)
method. It is one of the most successful acceleration schemes for 
general nonsmooth FPIs, exhibiting greater memory efficiency than 
classical LM-QN algorithms like the restarted Broyden's method \cite{SuperSCS}.

We focus here on the original type-II AA \cite{AA}. Let $G(v)=v-F(v)$ 
be the residual function and $M^k \in \mathbf{Z}_+$ a nonnegative integer 
denoting the memory size. Typically, $M^k=\min(M_{\max},k)$ for some 
maximum memory $M_{\max} \geq 1$ \cite{WalkerNi}. 
At iteration $k$, type-II AA stores in memory the most recent $M^k+1$ 
iterates $(v^k,\dots,v^{k-M^k})$ and replaces $v^{k+1}=F(v^k)$ with 
$v^{k+1}=\sum_{j=0}^{M^k}\alpha_j^kF(v^{k-M^k+j})$, where $\alpha^k
=(\alpha_0^k,\dots,\alpha_{M^k}^k)$ is the solution to 
\begin{equation}\label{aa2sub}
\begin{array}{ll}
\mbox{minimize} & \|\sum_{j=0}^{M^k}\alpha_j^k G(v^{k-M^k+j})\|_2^2\\
\mbox{subject to} & \sum_{j=0}^{M^k}\alpha_j^k=1.
\end{array}
\end{equation}
AA then updates its memory to $(v^{k+1},\dots,v^{k+1-M^{k+1}})$ before
repeating the process.

The accelerated $v^{k+1}$ can be seen as an extrapolation from the original
$v^{k+1}$ and the fixed-point 
mappings of a few earlier iterates. It has 
the potential to reduce the residual by a significant amount. In particular, 
when $F$ is affine, (\ref{aa2sub}) seeks an 
affine combination $\tilde{v}^{k+1}$ of the last $M^k+1$ 
iterates that minimizes the residual norm $\|G(\tilde{v}^{k+1})\|_2$, 
then computes $v^{k+1}=F(\tilde{v}^{k+1})$ by performing 
an additional FPI. 

\subsection{Main algorithm}
Despite the popularity of type-II AA, it suffers from instability in its 
original form \cite{RNA}. We propose a stabilized variant using adaptive 
regularization and a simple safeguarding globalization trick. 

\paragraph{Adaptive regularization}
Define $g^k=G(v^k)$, $y^k=g^{k+1}-g^k$, $s^k=v^{k+1}-v^k$, 
$Y_k=[y^{k-M^k}~\cdots~y^{k-1}]$, and $S_k=[s^{k-M^k}~\cdots~s^{k-1}]$. 
With a change of variables, (\ref{aa2sub}) can be rewritten
as \cite{WalkerNi}
\begin{equation}\label{aa2sub-simp}
\mbox{minimize} \quad \|g^k-Y_k\gamma^k\|_2
\end{equation}
with respect to $\gamma^k=(\gamma_0^k,\dots,\gamma_{M^k-1}^k)$, where 
\begin{equation}\label{alpha-gamma}
\alpha_0^k=\gamma_0^k, \quad \alpha_i^k=
\gamma_i^k-\gamma_{i-1}^k, \quad i = 1,\ldots,M^k-1, \quad \alpha_{M^k}^k=
1-\gamma_{M^k-1}^k.
\end{equation}

To improve stability, we add an $\ell_2$-regularization term to 
(\ref{aa2sub-simp}), scaled by the Frobenius norms of $S_k$ and $Y_k$, 
which yields the problem
\begin{equation}\label{unc_aa2sub}
\begin{array}{ll}
\text{minimize} & \|g^k-Y_k\gamma^k\|_2^2+\eta\left(\|S_k\|_F^2+
\|Y_k\|_F^2\right)\|\gamma^k\|_2^2,
\end{array}
\end{equation}
where $\eta > 0$ is a parameter. The regularization adopted in (\ref{unc_aa2sub}) 
differs from the one introduced in \cite{RNA} that directly regularizes $\alpha^k$.  
We argue that with the affine constraint on $\alpha^k$, it is more natural
to regularize the unconstrained variables $\gamma^k$. This approach also 
allows us to establish global convergence in section \ref{global_theory}. 
Intuitively, if the algorithm is converging, 
$\lim_{k\rightarrow\infty} \|S_k\|_F = \lim_{k\rightarrow\infty} \|Y_k\|_F = 0$, 
so the coefficient on the regularization term vanishes just like in the single iteration 
local analysis by \cite{RNA}.

\paragraph{A simple and relaxed safeguard} To achieve global convergence, 
we also need a safeguarding step. This step checks whether the current residual 
norm is sufficiently small. If true, the algorithm takes the AA update and 
skips the safeguarding check for the next $R-1$ iterations. 
Otherwise, the algorithm replaces the AA update with the vanilla FPI update. 
Here $R \in \mathbf{Z}_{++}$ is a positive integer that determines the degree of 
safeguarding; smaller values are more conservative, since the safeguarding step is 
performed more often.

\paragraph{A$2$DR} We are finally ready to present A2DR (Algorithm \ref{A2DR_alg}).
A2DR applies type-II AA with adaptive regularization (lines 10--11) and safeguarding 
(lines 13--17) to the DRS fixed-point mapping $F_{\rm DRS}$. 
In our description, $G_{\rm DRS} = I - F_{\rm DRS}$ is the residual mapping, 
$D>0,~\epsilon>0$ are constants that characterize the degree of safeguarding, 
and $n_{\rm AA}^k$ is the number of times the $AA$ candidate has passed the 
safeguarding check up to iteration $k$. 

\paragraph{Stopping criterion}
As explained in section \ref{intro}, to check optimality, we evaluate the 
primal and dual residuals $r_{\rm prim}^k$ and $r_{\rm dual}^k$. 
We terminate the algorithm and output $x^{k+1/2}$ as the approximate solution if 
\begin{equation}\label{opt_detect}
\|r^k\|_2\leq\epsilon_{\rm tol}=\epsilon_{\rm abs}+\epsilon_{\rm rel}\|r^0\|_2,
\end{equation}
where $r^k=(r_{\rm prim}^k, r_{\rm dual}^k)$ and $\epsilon_{\rm abs} > 0, 
\epsilon_{\rm rel}>0$ are user-specified absolute and relative tolerances, respectively. 

{\linespread{1.29}
\begin{algorithm}[h]
\caption{\rm{Anderson Accelerated Douglas--Rachford Splitting (A2DR)}}
\label{A2DR_alg}
\begin{algorithmic}[1]
\State {\bfseries Input:} initial point $v^0$, penalty coefficient $t>0$, 
regularization coefficient $\eta>0$,  safeguarding constants 
$D>0,~\epsilon>0, ~R \in \mathbf{Z}_{++}$, max-memory $M_{\max} \in \mathbf{Z}_+$. 
\State Initialize $n_{\rm AA}=0,~R_{\rm AA}=0,~I_{\text{safeguard}}=\texttt{True}$. 
\State Compute $v^1=F_{\rm DRS}(v^0)$, $g^0=v^0-v^1$.
\For{$k=1, 2, \dots$}
\State \texttt{\# Memory update}
\State Choose memory $M^k=\min(M_{\max}, k)$. 
\State Compute the DRS candidate: 
$v_{\rm DRS}^{k+1}=F_{\rm DRS}(v^k)$, $g^k=v^k-v^{k+1}_{\rm DRS}$.
\State Update $Y_k$ and $S_k$ with $y^{k-1}=g^k-g^{k-1}$ and $s^{k-1}=v^k-v^{k-1}$.
\State \texttt{\# Adaptive regularization}
\State Solve for $\gamma^k$ in regularized least squares (\ref{unc_aa2sub}) 
and compute weights $\alpha^k$ from (\ref{alpha-gamma}).
\State Compute the AA candidate: 
$v_{\rm AA}^{k+1}=\sum_{j=0}^{M^k}\alpha_j^kv_{\rm DRS}^{k-M^k+j+1}$.
\State \texttt{\# Safeguard}
\State {\bf If} $I_{\text{safeguard}}$ is \texttt{True} or $R_{\rm AA}\geq R$:
\State \hspace{0.5cm} \textbf{If} 
$\|g^k\|_2=\|G_{\rm DRS}(v^k)\|_2\leq D\|g^0\|_2(n_{\rm AA}/R+1)^{-(1+\epsilon)}$:
\State \hspace{1.2cm} $v^{k+1}=v_{\rm AA}^{k+1}$, $n_{\rm AA}=n_{\rm AA}+1$, 
$I_{\text{safeguard}}=\texttt{False}$, $R_{\rm AA}=1$.
\State \hspace{0.5cm} \textbf{else} $v^{k+1}=v_{\rm DRS}^{k+1}$, $R_{\rm AA}=0$. 
\State \textbf{else} $v^{k+1}=v_{\rm AA}^{k+1}$, $n_{\rm AA}=n_{\rm AA}+1$, 
$R_{\rm AA}=R_{\rm AA}+1$.
\State Terminate and output $x^{k+1/2}$ (\cf Algorithm \ref{DRS_alg}) 
if stopping criterion (\ref{opt_detect}) is satisfied. 
\EndFor
\end{algorithmic}
\end{algorithm}
}

%\clearpage
\section{Global convergence}\label{global_theory}
We now establish the global convergence properties of A2DR. In particular, 
we show that under the general assumptions in section \ref{intro}, 
A2DR either converges globally from any initial point or provides a certificate 
of infeasibility/unboundedness.

\subsection{Infeasibility and unboundedness}\label{global_theory:pathology}
When the optimality conditions do not hold even in the asymptotic sense, \ie, 
if the infimum of the primal or dual residual over all possible $x$ and $v$ is 
nonzero, problem (\ref{general}) is either infeasible or unbounded. We 
say that (\ref{general}) is \textit{infeasible} if 
$\dom f\cap \{x\,:\,Ax=b\}=\emptyset$, and we say that it is \textit{unbounded}
if (\ref{general}) is feasible, but $\inf_{Ax=b} f(x)=-\infty$.  
The following proposition characterizes sufficient certificates of 
infeasibility and unboundedness.

\begin{proposition}[certificates of infeasibility and unboundedness]
%~
Let $f^*:\reals^n\rightarrow\reals\cup\{+\infty\}$ denote the conjugate 
function of $f$, defined as $f^*(y)=\sup\nolimits_{x\in\dom f}\big(y^Tx-  f(x)\big)$.
\begin{enumerate}
\item[{\rm (i)}] If $\dist(\dom f, \,\{x\,:\,Ax=b\})>0$, then problem \eqref{general} 
is infeasible.
\item[{\rm (ii)}] If $\dist(\dom f^*,\,\range(A^T))>0$, then problem \eqref{general} 
is unbounded.
\end{enumerate}
\end{proposition}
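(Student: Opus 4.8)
The plan is to handle the two parts separately: part (i) is essentially definitional, while part (ii) requires the recession (asymptotic) calculus of convex functions. For part (i), I would argue directly from the meaning of $\dist$: if $\dist(\dom f,\,\{x:Ax=b\})>0$, then no point can belong to both sets, since a common point $\bar x$ would force the distance to be at most $\|\bar x-\bar x\|=0$. Hence $\dom f\cap\{x:Ax=b\}=\emptyset$, which is exactly infeasibility. Nothing more is needed here.

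For part (ii), the tool I would build on is the standard convex-analysis identity that, for proper CCP $f$, the recession function $f^\infty$ equals the support function of $\dom f^*$:
\[
f^\infty(d)=\sup\nolimits_{u\in\dom f^*}\langle u,d\rangle.
\]
The plan has three steps. First, I would manufacture a recession direction lying in $\ker A$. Since $\range(A^T)$ is a closed subspace and $\dom f^*$ is convex with $\dist(\dom f^*,\range(A^T))=\delta>0$, I would strictly separate the two sets, obtaining a unit vector $d$ and $\beta>0$ with $\langle d,u\rangle\ge\beta+\langle d,w\rangle$ for all $u\in\dom f^*$ and all $w\in\range(A^T)$. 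The crucial point is that $\sup_{w\in\range(A^T)}\langle d,w\rangle$ equals $0$ when $d\perp\range(A^T)$ and $+\infty$ otherwise; finiteness of the left side therefore forces $d\perp\range(A^T)$, i.e. $Ad=0$, and leaves $\langle d,u\rangle\ge\beta$ for every $u\in\dom f^*$. Second, I would translate this into strict descent: putting $w=-d\in\ker A$, the recession identity gives $f^\infty(w)=-\inf_{u\in\dom f^*}\langle u,d\rangle\le-\beta<0$, so $f$ has strictly negative recession along a direction that preserves the affine constraint. Third, given a feasible $x_0$ (so $Ax_0=b$, $x_0\in\dom f$), the ray $x_0+\tau w$ stays feasible while $f(x_0+\tau w)\le f(x_0)+\tau f^\infty(w)\to-\infty$, so $\inf_{Ax=b}f=-\infty$ and the problem is unbounded.

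I expect two places to demand the most care. The first is the separation step: one must exploit that $\range(A^T)$ is a \emph{subspace} (not merely a convex set) to force the separating direction into $\ker A$, and one must also accommodate that $\dom f^*$ need not be closed — this is handled by passing to $\overline{\dom f^*}$, which changes neither the distance to the closed subspace nor the relevant support-function values. The second, and more delicate, point is that the final descent step genuinely consumes a feasible point $x_0$: the hypothesis of (ii) certifies a \emph{feasible recession direction} of the objective, and it is the existence of a feasible starting point that upgrades "negative recession" to honest unboundedness. Making this interface precise — and keeping it cleanly separated from the infeasible regime covered by part (i) — is where I expect the argument to require the most attention.
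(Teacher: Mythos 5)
Your part (i) matches the paper's treatment, which simply records that the claim is true by definition. For part (ii) you take a genuinely different route: the paper's entire proof is to rewrite the dual of \eqref{general} as $\text{minimize}_{\nu}\,f^*(\nu)+g^*(-\nu)$ with $g=\mathcal{I}_{\{x\,:\,Ax=b\}}$, observe that the domain of $g^*(-\cdot)$ is $\range(A^T)$, and invoke Lemma 1 of the cited reference \cite{DRS_pathology} to pass from dual strong infeasibility to primal unboundedness. Your separation-plus-recession argument is essentially a self-contained proof of that imported implication: exploiting that $\range(A^T)$ is a subspace to force the separating direction into $\ker A$, and using $f^\infty=\sigma_{\dom f^*}$ to turn the separation margin into strict descent along a feasible direction, are exactly the right ingredients, and what your version buys is independence from the external lemma.

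The issue you flag at the end is, however, a real gap and not just a matter of ``making the interface precise'': the hypothesis $\dist(\dom f^*,\range(A^T))>0$ does not supply the feasible point $x_0$ that your third step consumes, and under the paper's own definition (unbounded means \emph{feasible} with $\inf_{Ax=b}f=-\infty$) the conclusion genuinely requires one. Concretely, take $n=2$, $f(x)=x_1+\mathcal{I}_{\{0\}}(x_2)$, $A=[0~~1]$, $b=1$. Then $\dom f=\reals\times\{0\}$ misses $\{x\,:\,x_2=1\}$, so the problem is infeasible; yet $\dom f^*=\{1\}\times\reals$ and $\range(A^T)=\{0\}\times\reals$ are at distance $1$, so the hypothesis of (ii) holds while its conclusion fails. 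Your argument therefore proves ``hypothesis (ii) and primal feasibility imply unboundedness''; to recover the proposition as literally stated one must either add primal feasibility as an assumption or weaken the conclusion to ``infeasible or unbounded.'' This defect is inherited from the paper, whose appeal to the cited lemma silently supplies the feasibility that your write-up makes explicit, so your instinct to isolate the feasible starting point as the delicate step was exactly right; the resolution is that it cannot be derived from the stated hypothesis and must be assumed.
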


When (i) holds, (\ref{general}) is also called \textit{(primal) 
strongly infeasible}, and when (ii) holds, (\ref{general}) is called 
\textit{dual strongly infeasible} \cite{DRS_newuse}. We say that 
(\ref{general}) is \textit{pathological} if it is either primal or 
dual strongly infeasible, and \textit{solvable} otherwise. Notice that 
when the problem is pathological, it is either infeasible or unbounded, 
but not both.

\begin{proof}
Claim (i) is true by definition. To prove claim (ii), observe that 
the dual problem of (\ref{general}) is 
$\text{minimize}_{\nu}~f^*(\nu)+g^*(-\nu)$, where $g^*(\nu)=b^T\lambda$ 
when $\nu=A^T\lambda$, and $g^*(\nu)=+\infty$ otherwise. By Lemma 1 
in \cite{DRS_pathology}, if $\dist(\dom f^*,\,\range(A^T))>0$, 
then the dual problem is strongly infeasible, and hence the primal problem 
(\ref{general}) is unbounded.
\end{proof}

If (\ref{general}) is pathological, an algorithm should provide a 
certificate of either (i) or (ii). We will show that A2DR achieves 
this goal by returning the distances in (i) and (ii) as a by-product of 
its iterations.

\subsection{Convergence results}
We are now ready to present the convergence results for A2DR. We begin by 
highlighting the contribution of adaptive regularization to the stabilization 
of AA. Indeed, by setting the gradient of the objective function in 
(\ref{unc_aa2sub}) to zero, we find the solution is 
\[
\gamma^k=(Y_k^TY_k+\eta \left(\|S_k\|_F^2+\|Y_k\|_F^2\right)I)^{-1}Y_k^Tg^k.
\]
Using the relationship between $\alpha^k$ and $\gamma^k$, we then write
\begin{equation}\label{xkupdate}
\begin{split}
v^{k+1}&=v^k-(I+(S_k-Y_k)(Y_k^TY_k+\eta 
\left(\|S_k\|_F^2+\|Y_k\|_F^2\right)I)^{-1}Y_k^T)g^k=v^k-H_kg^k,
\end{split}
\end{equation}
where 
$H_k=I+(S_k-Y_k)(Y_k^TY_k+\eta \left(\|S_k\|_F^2+
\|Y_k\|_F^2\right)I)^{-1}Y_k^T$. 

\begin{lemma}\label{Hkbound}
The matrices $H_k$ ($k\geq 0$) satisfy $\|H_k\|_2
\leq 1+2/\eta$.
\end{lemma}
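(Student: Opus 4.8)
The plan is to bound $\|H_k\|_2$ by peeling off the identity term and controlling the correction term through a product of operator-norm estimates. Writing $\rho_k = \|S_k\|_F^2 + \|Y_k\|_F^2$, the triangle inequality gives $\|H_k\|_2 \leq 1 + \|(S_k - Y_k)(Y_k^T Y_k + \eta\rho_k I)^{-1} Y_k^T\|_2$, so everything reduces to showing the second term is at most $2/\eta$. First I would apply submultiplicativity of the spectral norm to split this term into the three factors $\|S_k - Y_k\|_2$, $\|(Y_k^T Y_k + \eta\rho_k I)^{-1}\|_2$, and $\|Y_k^T\|_2$, and then estimate each one separately.

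The three estimates are each elementary. For the middle factor, since $Y_k^T Y_k \succeq 0$, every eigenvalue of $Y_k^T Y_k + \eta\rho_k I$ is at least $\eta\rho_k$, so this matrix is invertible (when $\rho_k > 0$) and $\|(Y_k^T Y_k + \eta\rho_k I)^{-1}\|_2 \leq 1/(\eta\rho_k)$. For the outer factors I would use the standard inequality $\|\cdot\|_2 \leq \|\cdot\|_F$ together with $\|S_k\|_F, \|Y_k\|_F \leq \sqrt{\rho_k}$: this yields $\|Y_k^T\|_2 = \|Y_k\|_2 \leq \sqrt{\rho_k}$ and, via the triangle inequality, $\|S_k - Y_k\|_2 \leq \|S_k\|_F + \|Y_k\|_F \leq 2\sqrt{\rho_k}$. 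Multiplying the three bounds gives $2\sqrt{\rho_k}\cdot(1/(\eta\rho_k))\cdot\sqrt{\rho_k} = 2/\eta$, and the factors of $\rho_k$ cancel exactly because the regularizer was scaled by $\|S_k\|_F^2 + \|Y_k\|_F^2$. This is the crucial mechanism: the problem-dependent scaling of the regularization term is precisely what makes the bound uniform in $k$.

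The only real subtlety, and the one step I would be careful to address, is the degenerate case $\rho_k = 0$, where $S_k = Y_k = 0$ and the regularized normal-matrix inverse is not defined by the displayed formula. In that case the correction term in $H_k$ is (by convention, or by taking the limit) zero, so $H_k = I$ and $\|H_k\|_2 = 1 \leq 1 + 2/\eta$ trivially; the same holds at $k = 0$ before any memory has been accumulated. Apart from this edge case the argument is a routine chain of norm inequalities, so I do not expect any genuine obstacle; the main thing to get right is choosing the (deliberately loose) Frobenius-norm bounds on the two outer factors so that the powers of $\rho_k$ cancel and the constant lands at exactly $2/\eta$.
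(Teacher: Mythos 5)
Your proof is correct and follows essentially the same route as the paper's: peel off the identity, bound the regularized inverse by $1/(\eta(\|S_k\|_F^2+\|Y_k\|_F^2))$, and control the outer factors via $\|\cdot\|_2\leq\|\cdot\|_F$ so that the Frobenius-norm scaling of the regularizer cancels to give exactly $2/\eta$. The only (welcome) addition is your explicit treatment of the degenerate case $\|S_k\|_F^2+\|Y_k\|_F^2=0$, which the paper leaves implicit.
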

\begin{proof}
Since $\|A\|_2\leq \|A\|_F$ for any matrix $A$,
\begin{equation*}
\begin{split}
\|H_k\|_2&\leq 1+\dfrac{\|S_k-Y_k\|_2\|Y_k\|_2}{\eta 
\left(\|S_k\|_F^2+\|Y_k\|_F^2\right)}\leq 
1+\dfrac{\|S_k-Y_k\|_F\|Y_k\|_F}{\eta\left(\|S_k\|_F^2
+\|Y_k\|_F^2\right)}\\
&\leq 1+\dfrac{\|S_k\|_F\|Y_k\|_F+\|Y_k\|_F^2}{\eta
\left(\|S_k\|_F^2+\|Y_k\|_F^2\right)}
\leq 1+2/\eta.
\end{split}
\end{equation*}
This completes the proof.
\end{proof}

The above lemma characterizes the stability ensured by regularization 
in (\ref{unc_aa2sub}), providing a stepping stone to our global convergence 
theorems.

\subsubsection{Solvable case} 
\begin{theorem}\label{global_conv_well}
Suppose that problem \eqref{general} is solvable. Then for any 
initialization $v^0$ and any hyperparameters 
$\eta > 0,~D > 0,~\epsilon > 0,~R \in \mathbf{Z}_{++},~M_{\max} \in 
\mathbf{Z}_+$, we have 
\begin{equation}\label{liminf}
\liminf_{k\rightarrow\infty}\|r^k\|_2=0,
\end{equation}
and the AA candidates are adopted infinitely often.
Additionally, if $F_{\rm DRS}$ has a fixed-point, $v^k$ 
converges to a fixed-point of $F_{\rm DRS}$ and $x^{k+1/2}$ 
converges to a solution of \eqref{general} as $k \rightarrow \infty$.
\end{theorem}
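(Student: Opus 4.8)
The plan is to reduce \eqref{liminf} to a statement about the fixed-point residual $g^k=G_{\rm DRS}(v^k)$ and then exploit three structural facts. First I would observe from Algorithm \ref{DRS_alg} that $g^k=x^{k+1/2}-x^{k+1}$; since $x^{k+1}=\Pi(v^{k+1/2})$ satisfies $Ax^{k+1}=b$, the primal residual is $r_{\rm prim}^k=A(x^{k+1/2}-x^{k+1})=Ag^k$, and a short computation with the projection formula shows the dual-residual minimizer obeys $\|r_{\rm dual}^k\|_2\le\frac1t\|g^k\|_2$. Hence $\|r^k\|_2\le C\|g^k\|_2$ with $C=\sqrt{\|A\|_2^2+1/t^2}$, so it suffices to prove $\liminf_k\|g^k\|_2=0$. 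The three facts are: (a) because $F_{\rm DRS}$ is $1/2$-averaged it is firmly nonexpansive, so along any run of plain DRS steps $\|g^k\|_2$ is nonincreasing and, by the minimal-displacement theorem for averaged maps, converges to $d:=\inf_v\|G_{\rm DRS}(v)\|_2$; (b) solvability of \eqref{general} forces $d=0$, since $d>0$ is exactly the strong primal/dual infeasibility ruled out by the Proposition; and (c) Lemma \ref{Hkbound} caps the AA step via $\|H_k\|_2\le1+2/\eta$.

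Next I would analyze the safeguard, whose threshold $\tau_n=D\|g^0\|_2(n/R+1)^{-(1+\epsilon)}$ is summable in $n$ precisely because $1+\epsilon>1$. The claim is that the safeguarding check is passed infinitely often. Before the first pass the iteration is plain DRS, so by (a)--(b) $\|g^k\|_2\to0$ and must eventually drop below the frozen threshold $\tau_0=D\|g^0\|_2$, producing a pass; the same argument, applied whenever the check fails and the iteration again collapses to plain DRS with the counter $n_{\rm AA}$ held fixed (hence a fixed positive threshold), shows each block of rejected steps terminates in a pass. Passes therefore recur, $n_{\rm AA}\to\infty$, and the AA candidate is adopted infinitely often. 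Along the passing iterations $\|g^k\|_2\le\tau_{n_{\rm AA}}\to0$, which gives $\liminf_k\|g^k\|_2=0$ and, with the reduction above, establishes \eqref{liminf}.

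For the final claim, suppose $F_{\rm DRS}$ has fixed-point set $\mathrm{Fix}\neq\emptyset$. Plain DRS steps are Fej\'er monotone, $\|v^{k+1}-v^*\|_2^2\le\|v^k-v^*\|_2^2-\|g^k\|_2^2$ for every $v^*\in\mathrm{Fix}$. For an adopted AA step, $v^{k+1}=v^k-H_kg^k$ gives $\|v^{k+1}-v^*\|_2\le\|v^k-v^*\|_2+(1+2/\eta)\|g^k\|_2$; combining the $2$-Lipschitzness of $G_{\rm DRS}$ with Lemma \ref{Hkbound} yields the growth bound $\|g^{k+1}\|_2\le(3+4/\eta)\|g^k\|_2$, so every AA iterate inside a skip window has residual at most a fixed multiple of the $\tau_n$ of its triggering pass. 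As $\sum_n\tau_n<\infty$, the total AA perturbation is summable, and $\{v^k\}$ is quasi-Fej\'er monotone with respect to $\mathrm{Fix}$: $\|v^{k+1}-v^*\|_2\le\|v^k-v^*\|_2+\delta_k$ with $\sum_k\delta_k<\infty$ uniformly in $v^*$. Thus $\{v^k\}$ is bounded and $\|v^k-v^*\|_2$ converges for each fixed $v^*$. Since $\liminf_k\|g^k\|_2=0$ and $G_{\rm DRS}$ is continuous, some subsequential limit $\bar v$ lies in $\mathrm{Fix}$; taking $v^*=\bar v$ and using that $\|v^k-\bar v\|_2$ converges while a subsequence tends to $0$ forces $v^k\to\bar v$. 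Continuity of $\prox_{tf}$ then gives $x^{k+1/2}\to\prox_{tf}(\bar v)$, a solution of \eqref{general} by the fixed-point/optimality correspondence of section \ref{DRS}.

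The hard part will be the safeguard bookkeeping: making rigorous that passes genuinely recur and that every adopted AA iterate---both those clearing the explicit check and those taken during the ensuing skip window---has residual dominated by a summable threshold. This is exactly where regularizing the unconstrained $\gamma^k$ pays off, since Lemma \ref{Hkbound} caps the per-step amplification of $\|g^k\|_2$ so that a bounded skip window cannot undo convergence, while the exponent $1+\epsilon>1$ makes the accumulated perturbation summable so the quasi-Fej\'er argument closes. The point requiring the most care is that whenever AA is rejected the iteration reverts to plain DRS, so facts (a)--(b) can be reinvoked to force the next pass.
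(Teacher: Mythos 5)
Your proposal is correct and follows essentially the same route as the paper's proof: reduce the primal/dual residuals to the fixed-point residual $\|g^k\|_2$, use Pazy's infimal-displacement result to show that any infinite run of plain DRS steps drives $\|g^k\|_2$ to zero under solvability (hence safeguard passes recur, and the summable thresholds give $\liminf_k\|g^k\|_2=0$), then combine Fej\'er monotonicity of the DRS steps with the amplification bound from Lemma \ref{Hkbound} and the summable AA perturbations to obtain quasi-Fej\'er convergence of $v^k$ to a fixed point and, by continuity of $\prox_{tf}$, convergence of $x^{k+1/2}$. The one step to tighten is your fact (b): the equivalence between solvability and $\inf_v\|v-F_{\rm DRS}(v)\|_2=0$ does not follow from the Proposition alone but from the range characterization of the Douglas--Rachford operator (the paper's citation of Bauschke, Hare, and Moursi), which identifies the infimal displacement vector with the minimum-norm element of $\overline{\dom f-\dom g}\cap t\left(\overline{\dom f^*+\dom g^*}\right)$.
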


The proof is left to the supplementary materials. A direct corollary of Theorem 
\ref{global_conv_well} is that the primal and dual residuals of 
$x^{k+1/2}$ converge to zero so long as (\ref{general}) 
is feasible and bounded. %, \ie, has a solution. 
Even if (\ref{general}) does not have a solution, A2DR still 
produces a sequence of asymptotically optimal points provided that \eqref{general}
is not pathological. %neither primal nor dual strongly infeasible. 
Thus, Algorithm \ref{A2DR_alg}
always terminates in a finite number of steps in these cases.
%when determining whether to output $x^{k+1/2}$, it suffices 
%to check if the primal and dual residuals are sufficiently small. 

In practice, the proximal operators and projections 
are often evaluated with error, so lines 3 and 5 in Algorithm 
\ref{DRS_alg} become $\hat{x}^{k+1/2}=\prox_{tf}(v^k)+\zeta_1^k$ and 
$\hat{x}^{k+1}=\Pi(v^{k+1/2})+\zeta_2^k$, where 
$\zeta_1^k,~\zeta_2^k\in\reals^n$ represent numerical errors. 
We use $\hat{x}^{k+1/2}$, $\hat{v}^{k+1/2}$, $\hat{x}^{k+1}$ to 
denote the error-corrupted intermediate $F_{\rm DRS}$ iterates, 
and ${x}^{k+1/2}$, ${v}^{k+1/2}$, ${x}^{k+1}$ to denote the 
error-free intermediate $F_{\rm DRS}$ iterates.
However, we still use the old notation (\eg, $v^k$ and $g^k$) to 
denote the error-corrupted A2DR iterates in the body of Algorithm 
\ref{A2DR_alg}. For cases with such errors, we have the following 
convergence result. 

\begin{theorem}\label{global_conv_well_approx}
Suppose that problem \eqref{general} is solvable, but the $F_{\rm DRS}$ 
iterates are evaluated with errors $\zeta_1^k,~\zeta_2^k \in \reals^n$. 
Assume that $F_{\rm DRS}$ has a fixed-point and 
$\exists ~\epsilon'>0$ such that $\|\zeta_1^k\|_2\leq \epsilon'$ 
and $\|\zeta_2^k\|_2\leq \epsilon'$ for all $k\geq 0$. 
Then for any initialization $v^0$ and any hyperparameters 
$\eta > 0,~D > 0,~\epsilon > 0,~R \in 
\mathbf{Z}_{++},~M_{\max} \in \mathbf{Z}_+$, if all $v^k$ 
and some fixed-point $v^\star$ of $F_{\rm DRS}$ are uniformly 
bounded, \ie, $\|v^k\|_2\leq L$ and $\|v^\star\|_2\leq L$ for 
a constant $L>0$, we have 
\begin{equation}\label{liminf_err}
\liminf_{k\rightarrow\infty}\|r_{\rm prim}^k\|_2\leq 
\|A\|_2(4\epsilon'+4\sqrt{L\epsilon'}),\quad \liminf_{k\rightarrow\infty}
\|r_{\rm dual}^k\|_2\leq \tfrac{1}{t}(4\epsilon'+4\sqrt{L\epsilon'}).
\end{equation}
The residuals are computed by plugging $v^k$ (as output
by A\/$2$DR) and the error-free intermediate iterates $x^{k+1/2}=\prox_{tf}(v^k)$
into \eqref{primal_k} and \eqref{dual_k_ls}. 
\end{theorem}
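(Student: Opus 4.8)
The plan is to reduce the entire statement to a single $\liminf$ bound on the \emph{true} residual norm $\|g^k\|_2=\|G_{\rm DRS}(v^k)\|_2$, and then translate that bound into \eqref{liminf_err} through two exact identities coming from the DRS step. First I would propagate the per-step evaluation errors $\zeta_1^k,\zeta_2^k$ through the three DRS operations. Because $\Pi$ is affine with linear part the orthogonal projection $P=I-A^\dagger A$ onto $\ker A$, a short computation gives $\hat v^{k+1}=F_{\rm DRS}(v^k)+e^k$ with $e^k=(2P-I)\zeta_1^k+\zeta_2^k$. Since $2P-I$ is a reflection, $\|2P-I\|_2=1$, so $\|e^k\|_2\leq\|\zeta_1^k\|_2+\|\zeta_2^k\|_2\leq 2\epsilon'$. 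In particular the computed residual obeys $\hat g^k=g^k-e^k$, whence $\big|\|\hat g^k\|_2-\|g^k\|_2\big|\leq 2\epsilon'$.

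Next I would record the two identities linking $g^k$ to the residuals, using the error-free intermediate iterates $x^{k+1/2}=\prox_{tf}(v^k)$ and $x^{k+1}=\Pi(v^{k+1/2})$ as stipulated in the theorem. From the definition of $F_{\rm DRS}$ one has $g^k=x^{k+1/2}-x^{k+1}$, and since $Ax^{k+1}=b$ the primal residual is $r_{\rm prim}^k=Ax^{k+1/2}-b=Ag^k$, giving $\|r_{\rm prim}^k\|_2\leq\|A\|_2\|g^k\|_2$. For the dual residual, substituting $v^{k+1/2}=2x^{k+1/2}-v^k$ and taking the projection-based dual variable $\lambda^k=\tfrac1t(AA^T)^\dagger(Av^{k+1/2}-b)$ yields $\tfrac1t(v^k-x^{k+1/2})+A^T\lambda^k=\tfrac1t g^k$; because A2DR instead selects the least-squares minimizer of the dual residual norm in \eqref{dual_k_ls}, this gives $\|r_{\rm dual}^k\|_2\leq\tfrac1t\|g^k\|_2$. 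Hence it suffices to prove $\liminf_k\|g^k\|_2\leq 4\epsilon'+4\sqrt{L\epsilon'}=:C$.

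The core of the argument is this last bound, which I would establish by contradiction: assume $\|g^k\|_2>C$ for all large $k$, so that $\|\hat g^k\|_2\geq C-2\epsilon'>0$, and split on how often the AA candidate passes the safeguard. If it passes infinitely often, then $n_{\rm AA}\to\infty$ drives the threshold $D\|g^0\|_2(n_{\rm AA}/R+1)^{-(1+\epsilon)}\to 0$, which at a passing step eventually contradicts $\|\hat g^k\|_2\geq C-2\epsilon'$. Otherwise the safeguard accepts AA only finitely often, so the algorithm falls back on the inexact fixed-point update $v^{k+1}=F_{\rm DRS}(v^k)+e^k$ for all large $k$. Writing $d_k:=\|v^k-v^\star\|_2$ and using that $F_{\rm DRS}$ is $1/2$-averaged with fixed-point $v^\star$, I would invoke $\|F_{\rm DRS}(v^k)-v^\star\|_2^2\leq d_k^2-\|g^k\|_2^2$, so that $d_{k+1}\leq\sqrt{d_k^2-\|g^k\|_2^2}+2\epsilon'\leq d_k-\tfrac{\|g^k\|_2^2}{2d_k}+2\epsilon'$. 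Since $d_k\leq 2L$ and $\|g^k\|_2>C$ force $\tfrac{\|g^k\|_2^2}{2d_k}\geq\tfrac{C^2}{4L}\geq 4\epsilon'$ (by the choice of $C$), we get $d_{k+1}\leq d_k-2\epsilon'$, driving $d_k\to-\infty$, which is impossible. Both branches contradict the assumption, so $\liminf_k\|g^k\|_2\leq C$, and substituting into the two identities yields \eqref{liminf_err}.

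The hard part will be the bookkeeping in the finitely-many-acceptances branch: one must rigorously justify that the iteration really does collapse to the inexact fixed-point step for all large $k$, so that the Fejér-type descent telescopes without being undone by residual accepted AA excursions. This is precisely where the summability of the safeguard threshold (ensured by the exponent $1+\epsilon>1$) together with the norm bound $\|H_k\|_2\leq 1+2/\eta$ from Lemma~\ref{Hkbound} are used: they cap the total upward drift of $d_k$ contributed by any AA steps, so that the bounded drift cannot compensate the strict $2\epsilon'$ descent per fixed-point step, contradicting $d_k\geq 0$. Matching the constant $C=4\epsilon'+4\sqrt{L\epsilon'}$ to the descent margin $\tfrac{C^2}{4L}-2\epsilon'$ is then the remaining, purely computational, step.
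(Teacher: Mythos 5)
Your proposal is correct and shares the paper's skeleton: reduce \eqref{liminf_err} to a $\liminf$ bound on the error-free fixed-point residual $\|G_{\rm DRS}(v^k)\|_2$ via the identities $r^k_{\rm prim}=A\,G_{\rm DRS}(v^k)$ and $\|r^k_{\rm dual}\|_2\le\frac1t\|G_{\rm DRS}(v^k)\|_2$ (the paper packages these as Lemma~\ref{fp_prob_res}), then split on whether the AA candidate is accepted infinitely or finitely often, handling the first case with the vanishing safeguard threshold and the second with the $1/2$-averaged inequality. Two local differences are worth noting. First, your error propagation is sharper: by observing that $\Pi$ is affine with linear part the orthogonal projection $P$ and that $2P-I$ is an isometry, you get $\|\hat F_{\rm DRS}(v^k)-F_{\rm DRS}(v^k)\|_2\le 2\epsilon'$, whereas the paper uses cruder triangle inequalities to obtain $3\|\zeta_1^k\|_2+\|\zeta_2^k\|_2\le 4\epsilon'$; your bound only makes the target constant $4\epsilon'+4\sqrt{L\epsilon'}$ easier to hit (and would in fact support a tighter statement). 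Second, in the finite-acceptance branch the paper telescopes $\|v^{k+1}-v^\star\|_2^2\le\|v^k-v^\star\|_2^2-\|G_{\rm DRS}(v^k)\|_2^2+16(\epsilon')^2+16L\epsilon'$ into a Ces\`aro average, reading off $\liminf_k\|G_{\rm DRS}(v^k)\|_2\le\sqrt{16(\epsilon')^2+16L\epsilon'}$ directly; you instead argue by contradiction, converting the same averaged inequality into a strict per-step descent $d_{k+1}\le d_k-2\epsilon'$ (using $d_k\le 2L$ and $C^2/(4L)\ge 4\epsilon'$) that forces $d_k\to-\infty$. Both routes are valid and yield the same constant. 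Finally, the concern in your closing paragraph is unnecessary: in the finitely-many-acceptances branch every iteration beyond the last acceptance is by definition an inexact DRS step, so there are no residual AA excursions to control, and neither the summability of the threshold nor Lemma~\ref{Hkbound} is needed for this theorem (they are needed for the iterate-convergence claim of Theorem~\ref{global_conv_well}, not here).
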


\subsubsection{Pathological case} 
\hfill

\begin{theorem}\label{global_conv_path}
Suppose that problem \eqref{general} is pathological. Then for 
any initialization $v^0$ and any hyperparameters 
$\eta > 0,~D > 0,~\epsilon > 0,~R \in \mathbf{Z}_{++},~M_{\max} \in
\mathbf{Z}_+$, the difference $v^k-v^{k+1}$ converges to some
nonzero vector $\delta v\in\reals^n$. If, furthermore, 
$\lim_{k\rightarrow\infty}Ax^{k+1/2}=b$, then \eqref{general} 
is unbounded, in which case 
$\|\delta v\|_2=t\dist(\dom f^*, 
\range(A^T))$. Otherwise, $(\ref{general})$ is infeasible and 
$\|\delta v \|_2\geq\dist(\dom f,\break\{x\,:\,Ax=b\})$ 
with equality when the dual problem is feasible.
\end{theorem}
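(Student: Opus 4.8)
The plan is to reduce the statement to the theory of the infimal displacement vector of the $1/2$-averaged map $F_{\rm DRS}$, and then to argue that the Anderson acceleration does not disturb the resulting asymptotic drift. Because $G_{\rm DRS}=I-F_{\rm DRS}$ is firmly nonexpansive, the closure $\overline{\range(G_{\rm DRS})}$ is convex with a unique minimal-norm element $\delta v$ satisfying $\|\delta v\|_2=\dist(0,\range(G_{\rm DRS}))$, and pathology of \eqref{general} is exactly the statement $\delta v\neq 0$. For the \emph{pure} DRS iteration it is known (see \cite{MonoPrimer,DRS_pathology}) that the displacements $g^k=v^k-F_{\rm DRS}(v^k)$ converge to $\delta v$. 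Since the A\/$2$DR iterates obey $v^k-v^{k+1}=H_kg^k$, the whole theorem reduces to establishing (a) $g^k\to\delta v$ along the accelerated sequence and (b) $\|H_k-I\|_2\to 0$, which together give $v^k-v^{k+1}=H_kg^k\to\delta v$.

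To obtain these, I would write each update as $v^{k+1}=F_{\rm DRS}(v^k)-e^k$ with perturbation $e^k:=(H_k-I)g^k$ (and $e^k=0$ on every plain DRS step), so that A\/$2$DR is a DRS iteration with errors $e^k$; as the displacement-convergence theorem for nonexpansive iterations is stable under vanishing errors, it suffices to prove $e^k\to 0$. Here the regularization is decisive. From the proof of Lemma \ref{Hkbound}, $\|H_k-I\|_2\le \|S_k-Y_k\|_F\|Y_k\|_F/(\eta(\|S_k\|_F^2+\|Y_k\|_F^2))$, and nonexpansiveness of $G_{\rm DRS}$ gives the column bound $\|y^j\|_2=\|g^{j+1}-g^j\|_2\le \|v^{j+1}-v^j\|_2=\|s^j\|_2$, whence $\|Y_k\|_F\le\|S_k\|_F$. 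In the drifting regime the increments $s^j$ approach the nonzero vector $-\delta v$, so $\|S_k\|_F$ is bounded away from $0$; thus $\|H_k-I\|_2\to 0$ will follow once $\|Y_k\|_F\to 0$, and then $e^k\to 0$ provided $\|g^k\|_2$ stays bounded.

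Because $\|Y_k\|_F^2=\sum_j\|y^j\|_2^2$ is a sum of at most $M_{\max}$ consecutive terms, $\|Y_k\|_F\to 0$ follows from square-summability of the residual gaps, $\sum_k\|g^{k+1}-g^k\|_2^2<\infty$. On a DRS step, firm nonexpansiveness of $G_{\rm DRS}$ yields the telescoping inequality $\|g^{k+1}-g^k\|_2^2\le\|g^k\|_2^2-\|g^{k+1}\|_2^2$, so DRS steps are monotone in $\|g^k\|_2$ and contribute a summable amount; moreover, once any AA candidate is accepted the skip window keeps incrementing $n_{\rm AA}$, driving the threshold $D\|g^0\|_2(n_{\rm AA}/R+1)^{-(1+\epsilon)}$ to $0$, and since $\|g^k\|_2\ge\|\delta v\|_2>0$ this forces every subsequent safeguard check to fail and inserts a DRS step at least once per $R+1$ iterations (while if no AA candidate is ever accepted the method is already pure DRS). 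The main obstacle is the resulting circularity: bounding the intervening AA perturbations via Lemma \ref{Hkbound} needs a uniform bound on $\|g^k\|_2$, yet that bound itself relies on $\|H_k-I\|_2$ being small. I would break the loop with a bootstrap over the $(R+1)$-iteration blocks: a uniform bound on $\|g^k\|_2$ across a block lets Lemma \ref{Hkbound} cap each AA step's deviation $e^k$ by $(2/\eta)\|g^k\|_2$, the block-closing DRS step restores monotonicity of $\|g^k\|_2$, and feeding the telescoping estimate back propagates the bound to the next block while making $\sum_k\|g^{k+1}-g^k\|_2^2$ finite; this simultaneously secures the uniform bound, $\|Y_k\|_F\to 0$, $e^k\to 0$, and hence $g^k\to\delta v$. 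This coupled estimate is where the real work lies.

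Granting $v^k-v^{k+1}\to\delta v$ and $g^k\to\delta v$, the dichotomy is read off from the DRS intermediate quantities. Since $x^{k+1}=\Pi(v^{k+1/2})$ satisfies $Ax^{k+1}=b$ and $g^k=x^{k+1/2}-x^{k+1}$, we have $r_{\rm prim}^k=Ax^{k+1/2}-b=Ag^k\to A\,\delta v$, so $\lim_k Ax^{k+1/2}=b$ is equivalent to $\delta v$ lying in the null space of $A$. In that case $\tfrac{1}{t}(v^k-x^{k+1/2})\in\partial f(x^{k+1/2})$ is an asymptotically dual-feasible direction, the problem is unbounded, and matching the displacement with the dual distance via Lemma 1 of \cite{DRS_pathology} gives $\|\delta v\|_2=t\,\dist(\dom f^*,\range(A^T))$. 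Otherwise $A\,\delta v\neq 0$, the primal residual cannot vanish, so \eqref{general} is infeasible; the same displacement decomposition \cite{DRS_pathology,DRS_newuse} then yields $\|\delta v\|_2\ge\dist(\dom f,\{x:Ax=b\})$, with equality exactly when the dual problem is feasible.
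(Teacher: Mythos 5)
Your endgame (reading the dichotomy off $r_{\rm prim}^k=Ag^k\to A\,\delta v$ and invoking \cite{DRS_pathology} and \cite{range_DRS}) matches the paper's, but the core of your argument---treating A2DR as an inexact DRS iteration and proving the perturbations $e^k=(H_k-I)g^k$ vanish---is not the paper's route, and it contains a genuine gap that you flag but do not close. The bootstrap cannot work as described: on an AA step the only a priori control is $\|e^k\|_2\le(2/\eta)\|g^k\|_2$, hence $\|g^{k+1}\|_2\le(1+2/\eta)\|g^k\|_2$, so a block of $R$ consecutive AA steps can multiply $\|g^k\|_2$ by $(1+2/\eta)^R$, while the single DRS step closing the block only guarantees non-increase, not a compensating contraction. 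In the solvable case the paper escapes this because each block opens with a \emph{passed} safeguard check, so $\|g^{k_i}\|_2\le D\|g^0\|_2(i+1)^{-(1+\epsilon)}$ seeds the block with a summably small residual; in the pathological case no such seed exists under your reading (AA steps taken without a passed check), so the uniform bound on $\|g^k\|_2$ does not propagate across blocks, $\|Y_k\|_F\to0$ is not obtained, and the circularity you identify is never broken. Your auxiliary claim that displacement convergence $v^k-v^{k+1}\to\delta v$ is stable under merely vanishing (rather than summable) perturbations would also need a proof or citation.

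The paper's proof avoids all of this with an observation you state and then set aside: since $\|g^k\|_2\ge\|\delta v^\star\|_2>0$ for every $k$ while the acceptance threshold $D\|g^0\|_2(n_{\rm AA}/R+1)^{-(1+\epsilon)}$ tends to zero as $n_{\rm AA}$ grows, only finitely many AA candidates can ever be accepted. In the paper's accounting every iteration is either a DRS step or lies within $R$ iterations of a passed check, so finitely many passed checks means finitely many AA steps in total---not, as you conclude, ``a DRS step at least once per $R+1$ iterations'' with AA steps persisting in between. After the last accepted AA candidate the algorithm is exactly vanilla DRS, and the theorem follows directly from Pazy's theorem \cite{drs_classic} together with \cite[Corollary~6.5]{range_DRS} and \cite[Lemma~1, Corollaries~3 and~5]{DRS_pathology}, with no perturbation analysis at all. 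Under that reading of the safeguard, your remaining steps collapse to the paper's.
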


The proof is given in the supplementary materials. Theorem \ref{global_conv_path} 
states that in pathological cases, the successive differences 
$\delta v^k=v^k-v^{k+1}$ can be used as certificates of 
infeasibility and unboundedness. We leave 
the practical design and implementation of these certificates 
to a future version of A2DR.

The same global convergence results (Theorems 
\ref{global_conv_well}--\ref{global_conv_path}) can be shown 
for stabilized type-I AA \cite{AA1}, which sometimes exhibited 
better numerical performance in our early experiments.
However, type-I AA 
introduces additional hyperparameters, and to ensure our solver
is robust without the need for extra hyperparameter tuning, 
we restrict ourselves to type-II AA. We leave 
type-I Anderson accelerated DRS to a future paper.

\section{Presolve, equilibration, and parameter selection}
\label{precond}
In this section, we introduce a few tricks that make A2DR 
more efficient in practice. 

\paragraph{Infeasible linear constraints}  In section \ref{intro},
we assumed that the linear constraint $Ax=b$ is feasible. 
However, this assumption may be violated in practice. 
To address this issue, we first solve the least squares problem
associated with the linear system. If the resulting residual 
is sufficiently small, we proceed to solve (\ref{general}) 
using A2DR. Otherwise, we terminate and return a certificate of 
infeasibility.

\paragraph{Preconditioning} 
To precondition the problem, we scale the variables $x_i$ and the linear 
constraints (rows of $Ax=b$), solve the problem with the scaled variables 
and data, then unscale to recover the original variables. Scaling the 
variables and constraints does not change the theoretical convergence, 
but can improve the practical convergence if the scaling factors are 
chosen well. A popular heuristic for improving the practical convergence 
is to choose the scalings to minimize, or at least reduce, the condition 
number of the coefficient matrix. In turn, a heuristic for reducing the 
condition number of the coefficient matrix is to equilibrate it, \ie, 
choose the scalings so that all rows have approximately equal norm and 
all columns have approximately equal norm. The regularized Sinkhorn--Knopp 
method described below does this, where the regularization allows it to 
gracefully handle matrices that cannot be equilibrated or would require 
very extreme scaling to equilibrate.

The details are as follows.
First, we equilibrate $A$ by choosing diagonal matrices 
$D=\textbf{diag}(d_1,\dots,d_m)$ and $E=\textbf{diag}(e_1I_{n_1},
\dots,e_NI_{n_N})$, with $d_1>0,\dots,d_m>0$ and $e_1>0,\dots,e_N>0$,
and forming the scaled matrix $\hat{A}=DAE$.
The scaled problem is
\begin{equation}\label{general-rescaled}
\begin{array}{ll}
\text{minimize} & \sum_{i=1}^N\hat{f}_i(\hat{x}_i)\\
\text{subject to} & \sum_{i=1}^N\hat{A}_i\hat{x}_i
=\hat{b}, 
\end{array}
\end{equation}
where
\[
\hat{f}_i(\hat{x}_i)=f_i(e_i\hat{x}_i),\quad 
\hat{A}=D[A_1~A_2~\cdots~A_N]E,\quad \hat{b}=Db.
\]
We apply A2DR to (\ref{general-rescaled}) to obtain $\hat{x}^\star$ and recover the approximate 
solution to our original problem (\ref{general}) via $x^\star=E\hat{x}^\star$. 

To determine the scaling factors $d_i$ and $e_j$, we use the regularized
Sinkhorn--Knopp method \cite{POGS}. 
First, we perform a change of variables to $u_i = 2\log(d_i)$ and $v_j = 2\log(e_j)$.
Then we solve the optimization problem
\BEQ\label{equil_obj}
\begin{array}{ll}
\mbox{minimize} & \sum_{i=1}^m\sum_{j=1}^NB_{ij}e^{u_i+v_j}-
N{\bf 1}^Tu-m{\bf 1}^Tv+\gamma\left(N\sum_{i=1}^me^{u_i}
+m\sum_{j=1}^Ne^{v_j}\right)
\end{array}
\EEQ
for $u\in\reals^m$ and $v\in\reals^N$, where $B_{ij}=
\sum_{l=n_1+\cdots+n_{j-1}+1}^{n_1+\cdots+n_j}A_{il}^2$ and $\gamma > 0$ is a regularization parameter. 
This problem is strictly convex. At its solution, the arithmetic means of the recovered scaling 
factors are equal. In our implementation, we set 
\[
\gamma = \dfrac{m+N}{mN}\sqrt{\epsilon^{\textrm{mp}}},
\]
where $\epsilon^\textrm{mp}$ is the machine precision. 
Notice that when $\gamma=0$ and \eqref{equil_obj} has a solution, the resulting $\hat A$ is equilibrated exactly, 
\ie, the rows all have the same $\ell_2$ norm, and the columns all have the same $\ell_2$ norm in the blockwise sense 
(with block sizes $n_1,\dots,n_N$).
 
We use coordinate descent to solve \eqref{equil_obj}, which produces \cite[Algorithm 2]{POGS}. This algorithm typically 
returns a solution $\tilde{u}, \tilde{v}$ in only a handful of iterations. We then recover $\tilde{d}_i=e^{\tilde{u}_i/2}$ 
and $\tilde{e}_j=e^{\tilde{v}_j/2}$. Define  $\tilde{D}=\textbf{diag}(\tilde{d}_1,\dots,\tilde{d}_m)$ and 
$\tilde{E}=\textbf{diag}(\tilde{e}_1I_{n_1},\dots,\tilde{e}_NI_{n_N})$. 
Although the arithmetic means of $(\tilde{d}_1,\ldots,\tilde{d}_m)$ and $(\tilde{e}_1,\ldots,\break\tilde{e}_N)$ are already equal, 
we also wish to enforce equality of their geometric means, which corresponds to equality of the arithmetic means 
of the problem variables. This leads to better performance in practice.
Accordingly, we scale $\tilde{D}$ and $\tilde{E}$  to obtain $D$ and $E$ such that the geometric mean of $(d_1,\ldots,d_m)$ 
equals that of $(e_1,\ldots,e_N)$ and $\|DAE\|_F=\sqrt{\min(m,N)}$.

Since $E$ is constant within each variable block, the proximal 
operator of $\hat f_i$ can be evaluated using the proximal operator of $f_i$ via
\begin{equation}\label{prox_scaled}
\begin{split}
\hat{x}_i &= \prox_{t\hat{f}_i}(\hat{v}_i)
=\text{argmin}_{\hat{x}_i}~ \left(f_i(e_i\hat{x}_i)
+\tfrac{1}{2t}\|\hat{x}_i-\hat{v}_i\|_2^2\right)\\
&=\tfrac{1}{e_i}\text{argmin}_{x_i}~ \left(f_i(x_i)+\tfrac{1}{2t}
\|x_i/e_i-\hat{v}_i\|_2^2\right)\\
&=\tfrac{1}{e_i}\prox_{e_i^2 tf_i}(e_i\hat{v}_i).
\end{split}
\end{equation}
All other steps of A2DR (including the projection step in Algorithm
\ref{DRS_alg}, line 5) remain the same, except with $A$ and $b$ 
replaced by $\hat{A}$ and $\hat{b}$. 
We check the stopping criterion directly on \eqref{general-rescaled}, 
trusting that our equilibration scheme 
provides an appropriate scaling of the original problem.
An alternative is to check the stopping criterion on \eqref{general} using 
the unscaled variables.

\paragraph{Choice of $t$} With equilibration, the choice of parameter
\[
t=\frac{1}{10}\left(\prod_{j=1}^Ne_j\right)^{-2/N}
\]
works well across a wide variety of problems.
(Recall that convergence is guaranteed in theory for any $t>0$.)
Our implementation uses this choice of $t$.

The intuition behind our choice is as follows. 
Consider the case of $f_i(x_i)=x_i^TQ_ix_i$ 
with $Q_i\in\symm_+^{n_i}$, the set of symmetric positive semidefinite matrices. 
The associated $\prox_{tf_i}(v_i)=(2tQ_i+I)^{-1}v_i$ is linear, and by \eqref{prox_scaled},
\[
\prox_{t\hat{f}_i}(\hat{v}_i)=\prox_{e_i^2t f_i}(\hat{v}_i).
\]
To avoid ill-conditioning when $e_i$ is an extreme value, we want to choose $t$ such that 
$e_i^2t = c > 0$, a constant for $i = 1,\ldots,N$. However, this is impossible unless 
$e_1,\ldots,e_N$ are all equal, so instead we minimize $\sum_{i=1}^N (\log t - \log(ce_i^{-2}))^2$, 
where we have taken logs because $e_i$ is on the exponential scale as discussed in the previous 
section. For $c = \frac{1}{10}$, the solution is precisely our choice of $t$.

\section{Implementation}\label{solver}
We now describe the implementation details and user interface of our A2DR solver.

\paragraph{Least squares evaluation} There are three places in A2DR
that require the solution of a least squares problem. 
First, to evaluate the $F_{\rm DRS}$ projection
\[
        \Pi(v^{k+1/2}) = v^{k+1/2} - A^{\dagger}(Av^{k+1/2} - b),
\]
we solve
\[
\begin{array}{ll}
        \text{minimize} & \|Ad-(Av^{k+1/2}-b)\|_2
\end{array}
\]
with respect to $d \in \reals^n$ to obtain $d^k=A^{\dagger}(Av^{k+1/2}-b)$. 
This is accomplished in our implementation with LSQR, a conjugate gradient (CG) method \cite{lsqr}. 
Specifically, we store $A$ as a sparse matrix and call 
\texttt{scipy.sparse.linalg.lsqr} with warm start at each iteration.
LSQR has low memory requirements and converges extremely fast on well-conditioned systems, 
making it ideal for the problems we typically encounter.

Second, to compute the approximate dual variable $\lambda^k$ in 
\eqref{dual_k_ls}, we minimize $\|r_{\rm dual}^k\|_2$. We use LSQR 
with a warm start for this as well.

Finally, to solve the regularized least squares problem 
\eqref{unc_aa2sub}, we offer two options: the first is again LSQR, 
and the second is \texttt{numpy.linalg.lstsq}, an SVD-based 
least squares solver. Our implementation defaults to the second choice.
This direct method is more stable, and since $Y_k$ is a tall matrix with 
very few columns, the SVD is relatively efficient to compute at each iteration.

\paragraph{Solver interface} The A2DR solver is called with the command
~\\
\begin{center}
\texttt{result = a2dr(p\_list, A\_list, b)}
%%%\texttt{x\_vals, primal, dual, num\_iters, solve\_time = a2dr(p\_list, 
%%%A\_list, b)}
\end{center}
~\\ 
where \texttt{p\_list} is the list of proximal operators of $f_i$, 
\texttt{A\_list} is the list of $A_i$, and \texttt{b} is the vector $b$.
The lists \texttt{p\_list} and \texttt{A\_list} must be given in the same
order of $i = 1,\ldots,N$.
Each element of \texttt{p\_list} is a Python function,
which takes as input a vector $v$ and parameter $t > 0$ and outputs the proximal
operator of $f_i$ evaluated at $(v,t)$. For example, if $N=2$ with 
$f_1(x_1) = \|x_1\|_2^2$ and $f_2(x_2) = \mathcal{I}_{\mathbf{R}_+^n}(x_2)$,
~\\
\begin{center}
\texttt{p\_list = [lambda v, t: v/(1.0 + 2*t), lambda v, t: numpy.maximum(v,0)]}
\end{center}
~\\
is a valid implementation. The \texttt{result} is a Python dictionary comprised of the key/value pairs 
\texttt{x\_vals}: a list of $x_1^{k^\star+1/2},\ldots,x_N^{k^\star+1/2}$ from the iteration $k^\star$ 
with the smallest $\|r^{k^\star}\|_2$, \texttt{primal} and \texttt{dual}: arrays containing the residual 
norms $\|r_{\rm prim}^k\|_2$ and $\|r_{\rm dual}^k\|_2$, respectively, at each iteration $k$, 
\texttt{num\_iters}: the total number of iterations, and \texttt{solve\_time}: the algorithm runtime.
%%%The output \texttt{x\_vals} is a list of $x_1^{k^\star+1/2},\ldots,x_N^{k^\star+1/2}$ 
%%%from the iteration $k^\star$ with the smallest $\|r^{k^\star}\|_2$,
%%%and \texttt{primal} and \texttt{dual} are arrays containing the residual norms 
%%%$\|r_{\rm prim}^k\|_2$ and $\|r_{\rm dual}^k\|_2$, respectively, at each iteration $k$. 
%%%The value \texttt{num\_iters} is the total number of iterations, and 
%%%\texttt{solve\_time} is the algorithm runtime.

Arguments \texttt{A\_list} and \texttt{b} are optional, and when omitted, the solver 
recognizes the problem as \eqref{general} without the constraint $Ax = b$. 
All other hyperparameters in Algorithm \ref{A2DR_alg}, the initial point $v^0$, 
as well as the choice of whether to use preconditioning and/or AA, are also optional. By default, both preconditioning and AA are enabled.

Last but not least, the distributed execution of the iteration 
steps, including the evaluation of the proximal operators and 
componentwise summation and subtraction, is implemented with the 
\texttt{multiprocessing} package in Python.

\section{Numerical experiments}\label{experiments}
The following experiments were carried out on a Linux 
server with $64$ 8-core Intel Xeon E5-4620 / $2.20$ GHz processors 
and $503$ GB of RAM. We used the default A2DR solver parameters 
throughout. In particular, the AA max-memory $M_{\max} = 10$, 
regularization coefficient $\eta = 10^{-8}$, safeguarding constants 
$D=10^6$, $\epsilon=10^{-6}$, and $R=10$, and initial $v^0=0$. We set 
the stopping tolerances to $\epsilon_{\rm abs}=10^{-6}$ and 
$\epsilon_{\rm rel}=10^{-8}$ and limited the maximum number of iterations 
to $1000$ unless otherwise specified. All data were generated such 
that the problems are feasible and bounded, and hence convergence of the 
primal and dual residuals is guaranteed. While it is possible to 
improve convergence with additional parameter tuning, we emphasize that 
A2DR consistently outperforms DRS by a factor of three or more using the 
solver defaults. This performance gain is robust across all problem instances.

For each experiment, we plotted the residual norm $\|r^k\|_2$ at each 
iteration $k$ for both A2DR and vanilla DRS. The plots against runtime are 
very similar since the AA overhead is less than $10\%$ of the per-iteration 
cost, so we refrain from showing them here. We also compared the final 
objective value and constraint violations with the solution obtained by 
CVXPY \cite{cvxpy,cvxpy_rewriting}. In all but a few problem instances, 
the results match within $10^{-4}$. The results that differ are due to 
CVXPY's solver failure, which we discuss in more detail below.

\subsection{Nonnegative least squares}
The nonnegative least squares problem is
\BEQ\label{eq:nnls}
\begin{array}{ll}
\mbox{minimize} & \|Fz-g\|_2^2 \\
\mbox{subject to} & z \geq 0,
\end{array}
\EEQ
where $z \in \reals^q$ is the variable, and $F\in\reals^{p\times q}$ and $g\in\reals^p$ are problem data.
This problem may be rewritten in form (\ref{general}) by letting
\[
f_1(x_1) = \|Fx_1 - g\|_2^2, \quad f_2(x_2) = \mathcal{I}_{\mathbf{R}_+^n}(x_2) 
\]
for $x_1,x_2 \in \reals^q$ and enforcing the constraint $x_1 = x_2$ 
with $A_1 = I, A_2 = -I$, and $b = 0$.
The proximal operators of $f_1$ and $f_2$ are
\begin{equation}
\begin{split}
\mbox{\bf prox}_{tf_1}(v) &= \mbox{argmin}_{x_1} \left\| 
\left[\begin{array}{c} F \\ \frac{1}{\sqrt{2t}} I \end{array}\right] 
x_1 - 
\left[\begin{array}{c} g \\ \frac{1}{\sqrt{2t}} v \end{array}\right]
\right\|_2^2, \\
\mbox{\bf prox}_{tf_2}(v) &= (v)_+. %\Pi_{\mathbf{R}_+^n}(v) = (v)_+.
\end{split}
\end{equation}
We evaluate $\prox_{tf_1}$ using LSQR.

\paragraph{Problem instance} Let $p=10000$ and $q=8000$. 
We took $F$ to be a sparse random matrix with $0.1\%$ nonzero entries, 
which are drawn i.i.d. (independently and identically distributed) from 
$\mathcal{N}(0,1)$, and $g$ to be a random vector from $\mathcal{N}(0,I)$.

The convergence results are shown in Figure \ref{nnls}. 
A2DR achieves $\|r^k\|_2 \leq 10^{-6}$ in under 400 iterations, 
while DRS flattens out at $\|r^k\|_2 \approx 10^{-2}$ until the maximum 
number of iterations is reached. Our algorithm's speed is a notable 
improvement over other popular solvers. We solved the same problem 
using an operator splitting quadratic program (OSQP) solver \cite{OSQP} 
and SCS, which took, respectively, 349 and 327 seconds to return a solution 
with tolerance $10^{-6}$. In contrast, A2DR converged in only 55 seconds 
and produced the smallest objective value up to a precision of $10^{-10}$.

In a second experiment, we set $p=300$ and $q=500$ and compared the 
performance under adaptive regularization, as described in \eqref{unc_aa2sub}, 
with no regularization and constant regularization. Figure \ref{nnls_reg} 
shows that adaptive regularization results in better convergence. By 1000 
iterations, the residual norm is nearly $10^{-6}$ in the adaptive case, 
while it is roughly $10^{-3}$ under the other two regularization schemes. 
Similar improvement arises in the examples below, but we have not included 
the plots for the sake of brevity.

% Figure 1
\begin{figure}
        \centering
        \includegraphics[width=10.5cm]{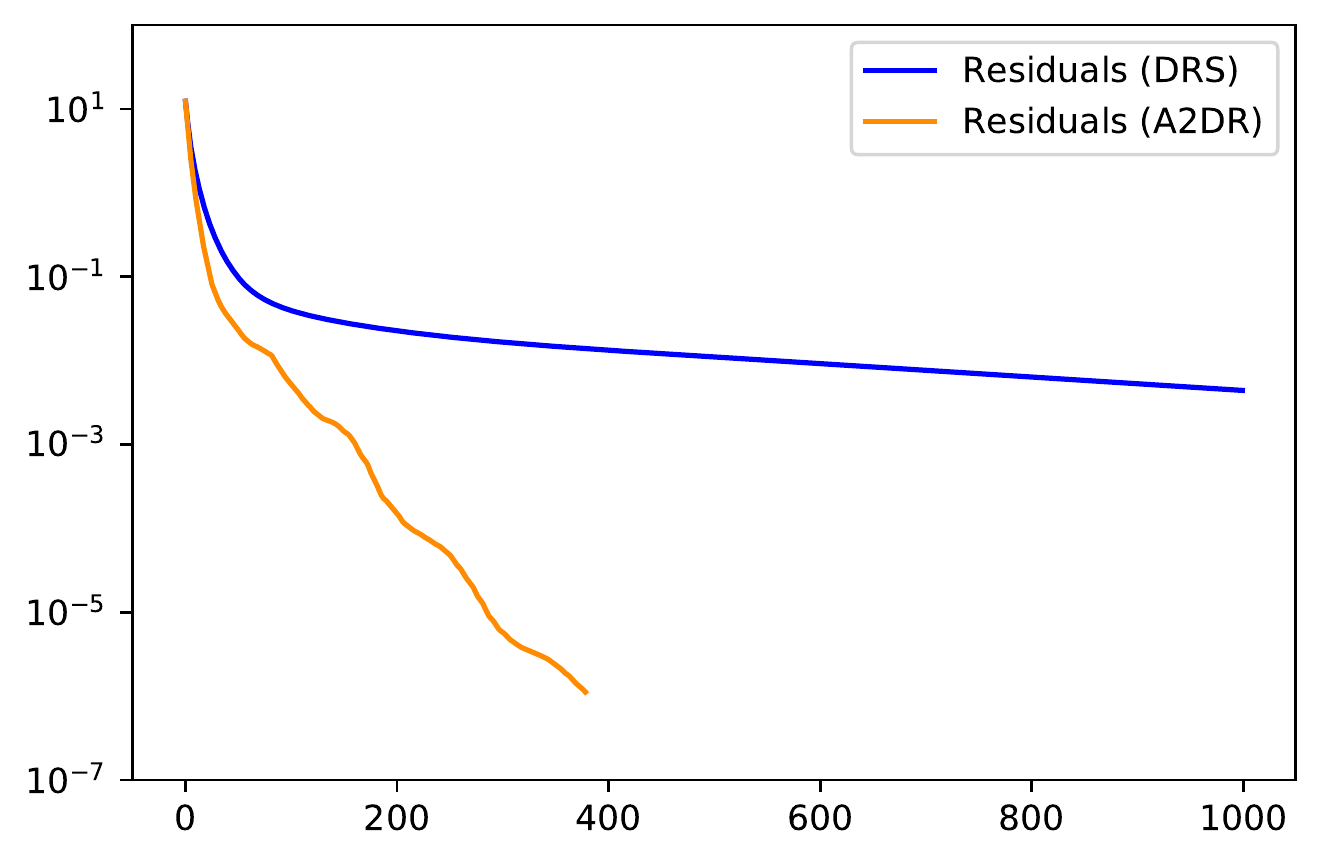}
        \caption{Nonnegative least squares: convergence of residual norms $\|r^k\|_2$.}
        \label{nnls}
\end{figure}

% Figure 2
\begin{figure}
        \centering
        \includegraphics[width=10.5cm]{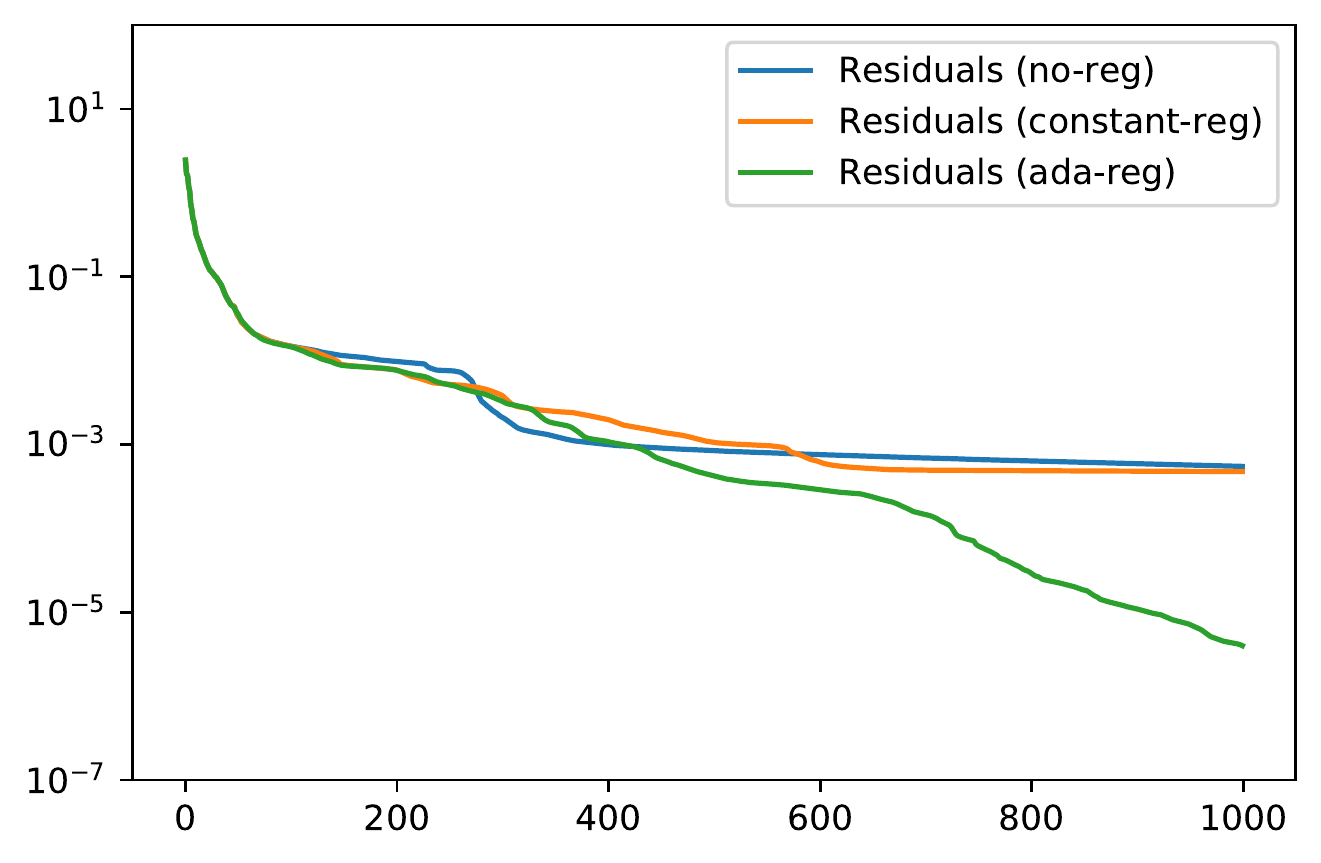}
        \caption{Nonnegative least squares: A\/$2$DR with no, constant, and adaptive regularization.}
        \label{nnls_reg}
\end{figure}

\subsection{Sparse inverse covariance estimation}
Suppose that $z_1,\dots,z_p$ are i.i.d. $N(0,\Sigma)$ 
with $\Sigma^{-1}$ known to be sparse. We can estimate the 
covariance matrix $\Sigma \in \symm_+^q$ 
by solving the optimization problem \cite{spinvcov,Banerjee:2008} 
\BEQ\label{eq:spinv}
\begin{array}{ll} 
\mbox{minimize} & -\log\det(S) + \mbox{tr}(SQ) + \alpha\|S\|_1,
\end{array}
\EEQ
where $S \in \symm^q$ (the set of symmetric matrices) is the variable, 
$Q= \frac{1}{p}\sum_{l=1}^p z_lz_l^T$ is the sample covariance, and 
$\alpha>0$ is a hyperparameter. We then take $\hat{\Sigma} = S^{-1}$ 
as an estimate of $\Sigma$. Here $\|S\|_1$ is the elementwise $\ell_1$ norm and
$\log\det$ is understood to be an extended real-valued function, \ie, 
$\log\det(S)=-\infty$ whenever $S\nsucc 0$. 

Let $x_i \in \reals^{q(q+1)/2}$ be some vectorization of $S_i 
\in \symm^q$ for $i = 1,2$. Problem \eqref{eq:spinv} can be 
represented in standard form (\ref{general}) by setting
\[
f_1(x_1) = -\log\det(S_1)+\mbox{tr}(S_1Q), \quad  f_2(x_2) = \alpha\|S_2\|_1,
\]
and $A_1=I$, $A_2=-I$, and $b=0$. 

The proximal operator of $f_1$ can be computed by combining 
the affine addition rule in \cite[section 2.2]{Proximal} with 
\cite[section 6.7.5]{Proximal}, while the proximal operator of 
$f_2$ is simply the shrinkage operator \cite[section 6.5.2]{Proximal}. 
The overall computational cost is dominated by the eigenvalue 
decomposition involved in evaluating $\prox_{tf_1}$, 
which has complexity $O(q^3)$.

\paragraph{Problem instance}
We generated $S \in \symm_{++}^q$, the set of symmetric positive definite matrices, with $q = 100$ and approximately 
$10\%$ nonzero entries. Then we calculated $Q$ using $p = 1000$ i.i.d. samples from $\mathcal{N}(0,S^{-1})$. 
Let $\alpha_{\max}=\sup_{i\neq j}|Q_{ij}|$ be the smallest $\alpha$ for which the solution of \eqref{eq:spinv}
is trivially the diagonal matrix $(\textbf{diag}(Q)+\alpha I)^{-1}$ \cite{Banerjee:2008}. We solved \eqref{eq:spinv} 
using $\alpha = 0.001\alpha_{\max}$, which produced an estimate of $S$ with $7\%$ nonzero entries. 

Figure \ref{sparse_inv_cov_est} depicts the residual norm curves. A2DR achieves $\|r^k\|_2 \leq 10^{-6}$ in 
less than 400 iterations, while DRS fails to fall below $10^{-4}$ even at 1000 iterations. 
The fluctuations in the A2DR residuals may be smoothed out by increasing the adaptive regularization coefficient $\eta$, 
but this generally leads to slower convergence.

We also ran A2DR on instances with $q = 1200$ and $q = 2000$ (vectorizations on the order of $10^6$) and 
compared its performance to SCS. In the former case, A2DR took 1 hour to converge to a tolerance of $10^{-3}$, 
while SCS took 11 hours to achieve a tolerance of $10^{-1}$ and yielded a much worse objective value. In the latter case, 
A2DR converged in 2.6 hours to a tolerance of $10^{-3}$, while SCS failed immediately with an out-of-memory error.

% Figure 3
\begin{figure}
        \centering
        \includegraphics[width=10.5cm]{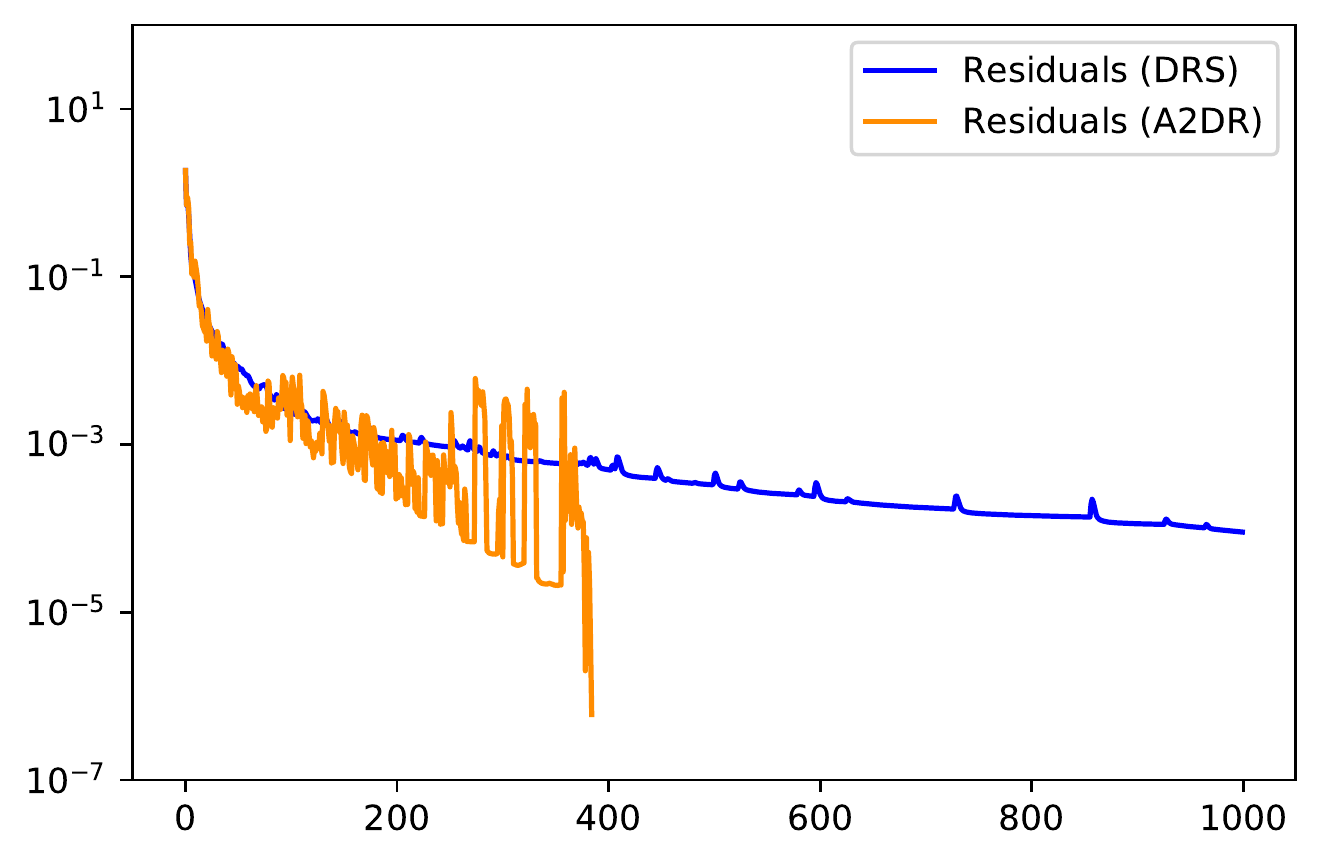}
        \caption{Sparse inverse covariance estimation: convergence of residual norms $\|r^k\|_2$.}
        \label{sparse_inv_cov_est}
\end{figure}

\subsection{\boldmath$\ell_1$ trend filtering} 
The $\ell_1$ trend filtering problem is \cite{TrendFiltering}
\BEQ\label{eq:l1trend}
\begin{array}{ll}
\mbox{minimize} & \frac{1}{2}\|y - z\|_2^2 + \alpha \|Dz\|_1,
\end{array}
\EEQ
where $z \in \reals^q$ is the variable, $y\in\reals^q$ is the problem data (\eg, time series), 
$\alpha \geq 0$ is a smoothing parameter, and $D \in \reals^{(q-2) \times q}$ is 
the second difference operator
\[
D = \left[\begin{array}{ccccccc}
1 & -2 &  1 & 0 & \ldots & 0  &0 \\
0 &  1 & -2 & 1 & \ldots & 0 & 0  \\
\vdots & \vdots & \ddots & \ddots & \ddots & \vdots& \vdots \\
0 & 0 & \ldots &1 & -2 & 1 & 0 \\
0 & 0 & \ldots & 0 & 1 & -2 & 1 
\end{array}\right].
\]
Again, we can rewrite the above problem in standard form 
\eqref{general} by letting 
\[
f_1(x_1) = \frac{1}{2}\|y - x_1\|_2^2, \quad f_2(x_2) = 
\alpha\|x_2\|_1
\]
with variables $x_1 \in \reals^q, x_2 \in \reals^{q-2}$ and 
constraint matrices $A_1 = D, A_2 = -I$, and $b = 0$. The proximal 
operator of $f_1$ is simply $\mbox{\bf prox}_{tf_1}(v) = 
\frac{ty + v}{t + 1}$, and the proximal operator of 
$f_2$ is the shrinkage operator \cite[section 6.5.2]{Proximal}. 
Since $D$ is tridiagonal, the projection $\Pi(v^{k+1/2})$ can 
be computed in $O(q)$.

\paragraph{Problem instance} We drew $y$ from $\mathcal{N}(0,I)$ with $q=10^6$ and solved \eqref{eq:l1trend} 
using $\alpha=0.01\alpha_{\max}$, where $\alpha_{\max}=\|y\|_{\infty}$ is the smallest $\alpha$ for which 
the solution is trivially zero.

The results are shown in Figure \ref{l1_trend_filter}. A2DR converges about three times faster than DRS, 
reaching a tolerance of $10^{-6}$ in 360 iterations.

% Figure 4
\begin{figure}
        \centering
        \includegraphics[width=10.5cm]{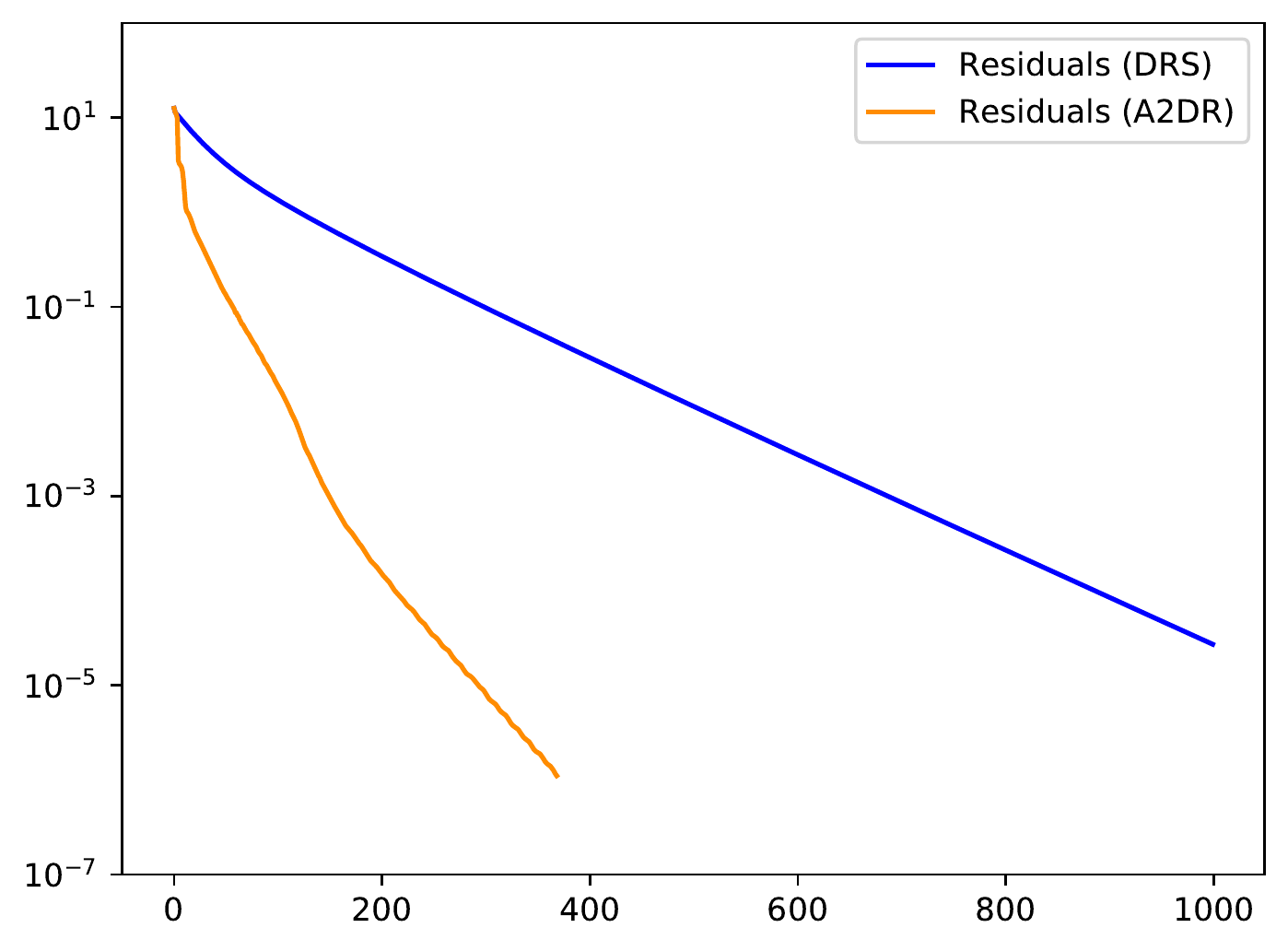}
        \caption{$\ell_1$ trend filtering: convergence of residual norms $\|r^k\|_2$.}
        \label{l1_trend_filter}
\end{figure}  

\subsection{Single commodity flow optimization} Consider a 
network with $p$ nodes and $q$ (directed) arcs described by 
an incidence matrix $B \in \reals^{p \times q}$ with
\[
B_{ij} = \left\{\begin{array}{rl}
1 & \mbox{arc $j$ enters node $i$},\\
-1 & \mbox{arc $j$ leaves node $i$},\\
0 & \mbox{otherwise}.\end{array}\right.
\]
Suppose a single commodity flows in this network. Let $z \in \reals^q$ 
denote the arc flows and $s \in \reals^p$ the node sources. We 
have the flow conservation constraint $Bz + s = 0$. This in turn
implies $\ones^Ts = 0$ since $B^T\ones = 0$ by construction. The 
total cost of traffic on the network is the sum of a flow cost, 
represented by $\psi:\reals^q \rightarrow \reals \cup \{\infty\}$, 
and a source cost, represented by $\phi:\reals^p \rightarrow \reals 
\cup \{\infty\}$. We assume that these costs are separable with 
respect to the flows and sources, \ie, 
$\psi(z) = \sum_{j=1}^q \psi_j(z_j)$ and $\phi(s) = \sum_{i=1}^p 
\phi_i(s_i)$. Our goal is to choose flow and source vectors such that the 
network cost is minimized:
\BEQ
\begin{array}{ll}
\mbox{minimize} & \psi(z) + \phi(s) \\
\mbox{subject to} & Bz + s = 0
\end{array}
\EEQ
with respect to $z$ and $s$. 

We consider a special case modeled on the DC power flow problem in power engineering \cite{Dyn_Energy}. 
The flow costs are quadratic with a capacity constraint:
\[
\psi_j(z_j) = \begin{cases}
c_jz_j^2, & |z_j| \leq z_j^{\max}, \\
+\infty & \mbox{otherwise.}
\end{cases}
\]
The source costs are determined by the node type, 
which can fall into one of three categories:
\begin{enumerate}
\item \emph{Transfer/way-point nodes} fixed at $s_i = 0$, \ie, 
$\phi_i(s_i)=\mathcal{I}_{\{0\}}(s_i)$.
%$\phi_i(s_i)=d_is_i^2+\mathcal{I}_{\{0\}}(s_i)=\mathcal{I}_{\{0\}}(s_i)$.
\item \emph{Sink nodes} fixed at $s_i = L_i$ (for $L_i < 0$), \ie, 
$\phi_i(s_i)=\mathcal{I}_{\{L_i\}}(s_i)$.
%$\phi_i(s_i)=d_is_i^2+\mathcal{I}_{\{L_i\}}(s_i)=d_iL_i^2+\mathcal{I}_{\{L_i\}}(s_i)$.
\item \emph{Source nodes} with cost
\[
\phi_i(s_i) = \begin{cases}
d_is_i^2, & 0 \leq s_i \leq s_i^{\max}, \\
+\infty & \mbox{otherwise}.
\end{cases}     
\]
\end{enumerate}
The vectors $c \in \reals_+^q,d \in \reals_+^p,z^{\max} 
\in \reals^q$, and $s^{\max} \in \reals^p$ are constants.

This problem may be restated as \eqref{general} with $x_1 \in \reals^q, x_2 \in \reals^p, f_1(x_1) = 
\psi(x_1),\break f_2(x_2) = \phi(x_2), A_1 = B, A_2 = I$, and $b = 0$. 
Since costs are separable, the proximal operators can be 
calculated elementwise as
\begin{equation}
\begin{split}
(\mbox{\bf prox}_{tf_1}(v))_j &= \Pi_{[-z_j^{\max},z_j^{\max}]}
\left(\frac{v_j}{2 tc_j + 1}\right), \quad j = 1,\ldots,q, \\
(\mbox{\bf prox}_{tf_2}(w))_i &= \Pi_{[0,s_i^{\max}]}
\left(\frac{w_i}{2 td_i + 1}\right), \quad i 
\mbox{ is a source node.}
\end{split}
\end{equation}
Here $\Pi_{\mathcal{C}}$ denotes the projection onto the set $\mathcal{C}$. Notice that in evaluating the 
proximal operator, we implicitly solve a linear system related to $L = BB^T$, which is the Laplacian 
associated with the network.

\paragraph{Problem instance} 
We set $p=4000$ and $q=7000$ and generated the incidence matrix as follows. 
Let $\tilde{B}\in\reals^{p\times (q-p+1)}$, where each column $j$ is zero except 
for two entries $\tilde{B}_{ij} = 1$ and $\tilde{B}_{i'j} = -1$, whose positions 
are chosen uniformly at random. Define $\hat{B}\in\reals^{p\times (p-1)}$ 
with $\hat{B}_{ii}=1$ and $\hat{B}_{(i+1)i}=-1$ for $i=1,\dots,p-1$. 
The final incidence matrix is $B=[\tilde{B}~\hat{B}]$. 

To construct the source vector, we first drew $\breve{s} \in \reals^p$ i.i.d.\ from $\mathcal{N}(0,I)$ and defined
\[
        \tilde{s}_i = \begin{cases} 
                0, & i = 1,\ldots,\lfloor\frac{p}{3}\rfloor, \\
                -|\breve{s}_{i}|, & i=\lfloor \frac{p}{3}\rfloor+1,\dots,\lfloor\frac{2p}{3}\rfloor, \\
                \sum_{l=\lfloor p/3\rfloor+1}^{\lfloor 2p/3\rfloor}|\breve{s}_{l}|
                /(p-\lfloor \frac{2p}{3}\rfloor), & i=\lfloor \frac{2p}{3}\rfloor+1,\dots,p.
        \end{cases}
\]
We took the first $\lfloor \frac{p}{3}\rfloor$ entries to be the transfer nodes, the second 
$\lfloor \frac{2p}{3}\rfloor-\lfloor \frac{p}{3}\rfloor$ entries to be the sink nodes with $L_i=\tilde{s}_i$, 
and the last $p-\lfloor \frac{2p}{3}\rfloor$ entries to be the source nodes, where
\[
        s_i^{\max} = \begin{cases}
                \tilde{s}_i+0.001, & i=\lfloor \frac{2p}{3}\rfloor,\dots,\lfloor \frac{5p}{6}\rfloor, \\
                2(\tilde{s}_i+0.001), & i=\lfloor \frac{5p}{6}\rfloor,\dots,p.
        \end{cases}
\]
To get the flow bounds, we solved $B\tilde{x}=-\tilde{s}$ for $\tilde{x}$, and let 
\[
        x_j^{\max} = \begin{cases}
                |\tilde{x}_j|+0.001, & j=1,\dots,\lfloor \frac{q}{2}\rfloor, \\
                2(|\tilde{x}_j|+0.001), & j=\lfloor \frac{q}{2}\rfloor+1,\dots,q.
        \end{cases}
\]
Finally, the entries of $c$ and $d$ were drawn i.i.d. from $\mbox{Uniform}(0,1)$.

Figure \ref{single_com_flow} depicts the results of our experiment. 
A2DR converges to a tolerance of $10^{-6}$ in less than 1200 iterations, while DRS remains above $10^{-4}$ 
even once the maximum iterations of 2000 is reached. For this problem, we also attempted to find a solution using SCS, 
but the solver failed to converge to its default tolerance of $10^{-5}$ in 5000 iterations, 
finishing with a linear constraint violation of $\|Bz + s\|_2 > 0.3$. In contrast, A2DR's final result yields 
$\|Bz + s\|_2 \approx 10^{-6}$.

 % Figure 5
 \begin{figure}
        \centering
        \includegraphics[width=10.5cm]{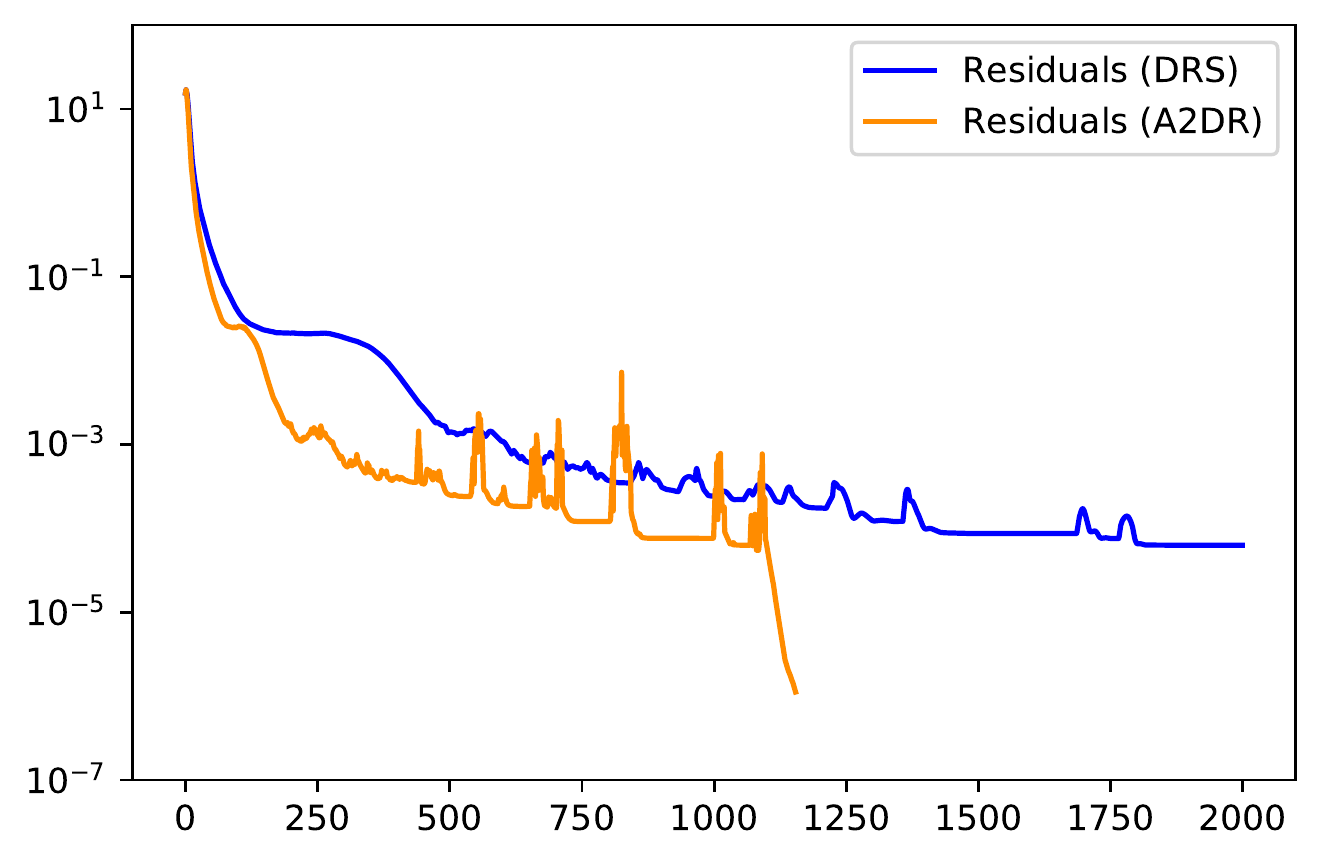}
        \caption{Single commodity flow: convergence of residual norms $\|r^k\|_2$.}
        \label{single_com_flow}
\end{figure}

\subsection{Optimal control} We are interested in the following
finite-horizon optimal control problem:
\BEQ\label{eq:opt_ctrl}
\begin{array}{ll}
\mbox{minimize} & \sum_{l=1}^L \phi_l(z_l, u_l) \\
\mbox{subject to} & z_{l+1} = F_lz_l + G_lu_l + h_l, 
\quad l = 1,\ldots,L-1, \\
& z_1 = z_{\text{init}}, \quad z_L = z_{\text{term}}
\end{array}
\EEQ
with state variables $z_l \in \reals^q$, control variables 
$u_l \in \reals^p$, and cost functions $\phi_l:\reals^q \times 
\reals^p \rightarrow \reals \cup \{\infty\}$. The data consist 
of an initial state $z_{\text{init}} \in \reals^q$, a terminal state
$z_{\text{term}}\in\reals^q$, and dynamics 
matrices $F_l \in \reals^{q \times q}, G_l \in \reals^{q \times p}$, 
and $h_l \in \reals^q$ for $l = 1,\ldots,L-1$. 
Let $z = (z_1,\ldots,z_L) \in \reals^{Lq}$ and 
$u = (u_1,\ldots,u_L) \in \reals^{Lp}$. If we define
\[
\tilde F = \left[\begin{array}{ccccc}
I &    0 & \ldots &        0 &   0 \\
-F_1 &    I & \ldots &        0 &   0 \\
0 & -F_2 & \ldots &        0 &   0 \\
\vdots & \vdots & \ddots & \vdots & \vdots \\
0 &    0 & \ldots & -F_{L-1} & I \\
0 &    0 & \ldots &        0 & I
\end{array}\right], \quad 
\tilde G = \left[\begin{array}{ccccc}
0 &    0 & \ldots &        0 & 0 \\
-G_1 &    0 & \ldots &        0 & 0 \\
0 & -G_2 & \ldots &        0 & 0 \\
\vdots & \vdots & \ddots & \vdots & \vdots \\
0 &    0 & \ldots & -G_{L-1} & 0 \\
0 &    0 & \ldots &        0 & 0
\end{array}\right],
\]
and $\tilde h = (z_{\text{init}}, h_1, \ldots, h_{L-1}, z_{\text{term}})$, 
then the constraints can be written compactly as 
$\tilde Fz + \tilde Gu = \tilde h$.

We focus on a time-invariant linear quadratic version of 
(\ref{eq:opt_ctrl}) with $F_l = F, G_l = G, h_l = 0$, 
and
\[
\phi_l(z_l,u_l) = \|z_l\|_2^2 + \|u_l\|_2^2 + 
\mathcal{I}_{\{u\,:\,\|u\|_{\infty} \leq 1\}}(u_l), 
\quad l = 1,\ldots,L.
\]
This problem is equivalent to \eqref{general} with $x_1 \in \reals^{Lq}, x_2 \in \reals^{Lp}$,
\[
f_1(x_1) = \|x_1\|_2^2, \quad f_2(x_2) = \|x_2\|_2^2 + 
\mathcal{I}_{\{u\,:\,\|u\|_{\infty} \leq 1\}}(x_2),
\]
and constraint matrices $A_1 = \tilde F, A_2 = \tilde G$, 
and $b = \tilde h$. The proximal operators of $f_i$ have 
closed forms $\mbox{\bf prox}_{tf_1}(v) = 
\frac{v}{2t+1}$ and $\mbox{\bf prox}_{tf_2}(w) 
= \Pi_{[-1,1]}\big(\frac{w}{2t+1}\big)$.

\paragraph{Problem instance}
We set $p=80, q=150$, and $L=20$ and drew the entries of $F, G, h$, and $z_{\rm init}$ i.i.d. from $\mathcal{N}(0,1)$. 
The matrix $F$ was scaled by its spectral radius so its largest eigenvalue has magnitude one. 
To determine $z_{\rm term}$, we drew $\hat u_l \in \reals^p$ i.i.d. from $\mathcal{N}(0,I)$, normalized to get 
$\tilde u_l = \hat u_l/\|\hat u_l\|_{\infty}$, and computed $\tilde z_{l+1} = F\tilde z_l + G\tilde u_l + h$ 
for $l = 1,\ldots,L-1$ starting from $\tilde z_1 = z_{\rm init}$. We then chose the terminal state to be 
$z_{\rm term} = \tilde z_L$.

Figure \ref{opt_cont} depicts the residual curves for problem (\ref{eq:opt_ctrl}). DRS requires over five times 
as many iterations to converge as A2DR, which reaches a tolerance of $10^{-6}$ in just under 100 iterations. 
For comparison, we solved the same problem in CVXPY with OSQP and SCS and found that neither solver converged to its 
default tolerance ($10^{-4}$ and  $10^{-5}$, respectively) by its maximum number of iterations. 
Indeed, OSQP returned a solver error, while SCS terminated with a linear constraint violation of 
$\|\tilde Fz + \tilde Gu - \tilde h\|_2 > 0.9$. A2DR's final constraint violation is only about $10^{-6}$.

 % Figure 6
 \begin{figure}
        \centering
        \includegraphics[width=10.5cm]{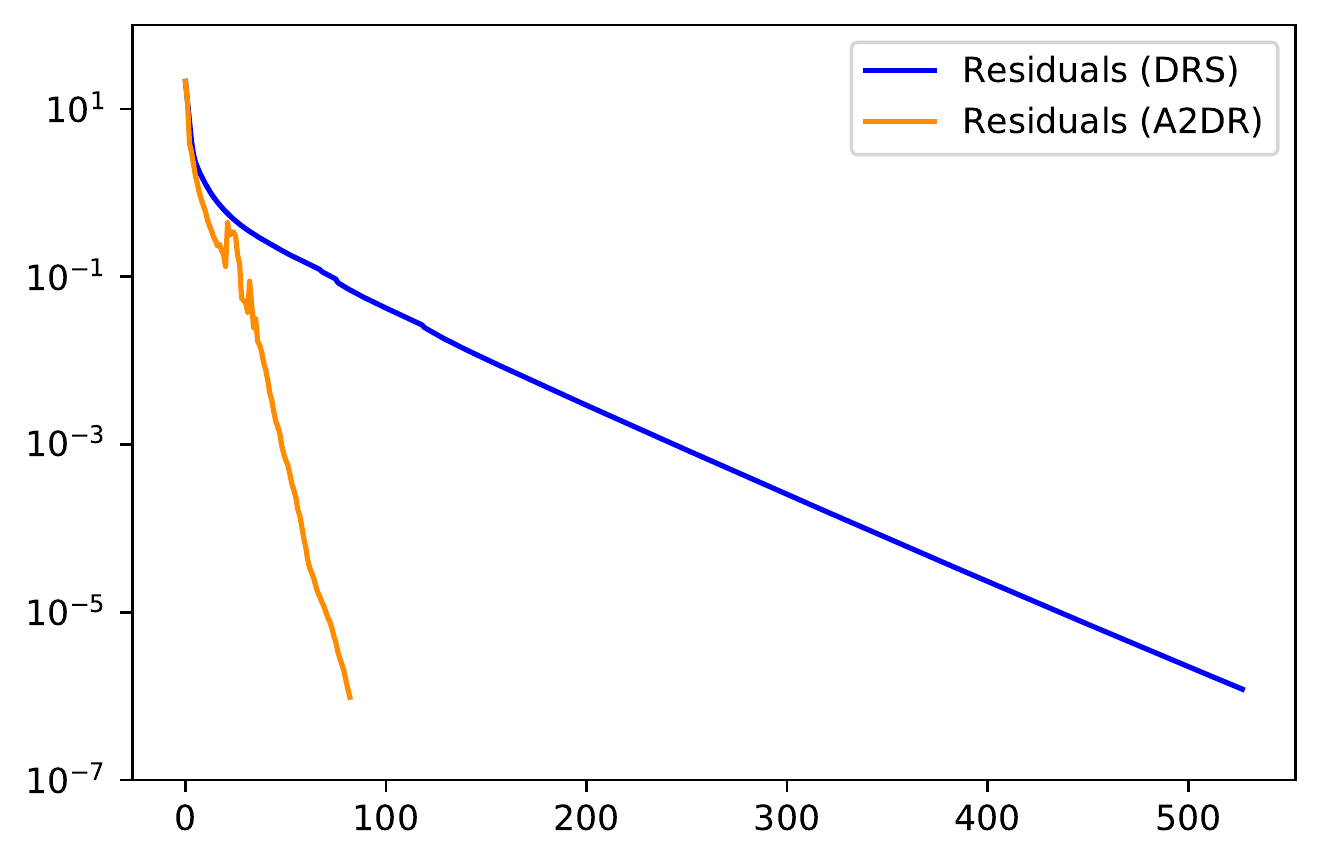}
        \caption{Optimal control: convergence of residual norms $\|r^k\|_2$.}
        \label{opt_cont}
\end{figure}

\subsection{Coupled quadratic program} We consider a quadratic 
program in which $L$ variable blocks are coupled through a set 
of $s$ linear constraints, represented as 
\BEQ\label{ms_oc}
\begin{array}{ll}
\mbox{minimize} & \sum_{l=1}^L z_l^TQ_lz_l+c_l^Tz_l\\
\mbox{subject to} & F_lz_l\leq d_l,\quad l=1,\dots,L,\\
& \sum_{l=1}^LG_lz_l=h
\end{array}
\EEQ
with respect to $z=(z_1,\dots,z_L)$, where $z_l \in \reals^{q_l}, 
Q_l\in \symm_{+}^{q_l}, c_l\in\reals^{q_l}, F_l \in \reals^{p_l 
\times q_l}, d_l \in \reals^{p_l}, G_l \in \reals^{s \times q_l}$, 
and $h \in \reals^s$ for $l = 1,\ldots,L$.

We can rewrite (\ref{ms_oc}) in standard form with $N=L$, $x=z$,
\[
f_i(x_i)=x_i^TQ_ix_i+c_i^Tx_i+\mathcal{I}_{\{x\,:\,F_ix\leq d_i\}}(x_i), 
\quad i = 1,\ldots,L,
\]
$A=[G_1~\cdots~G_L]$, and $b = h$. The proximal operator $\prox_{tf_i}(v_i)$ 
is evaluated by solving 
% the following quadratic programs with respect to $x_i$:
\BEQ\label{ms_oc_std}
\begin{array}{ll}
\mbox{minimize} & x_i^T\left(Q_i+\frac{1}{2t}I\right)
x_i +(c_i-\frac{1}{t}v_i)^Tx_i\\
\mbox{subject to} & F_ix_i\leq d_i\\
\end{array}
\EEQ
with respect to $x_i \in \reals^{q_i}$.

\paragraph{Problem instance} Let $L=8$, $s=50$, $q_l=300$, and $p_l=200$ for $l=1,\ldots,L$. We generated the entries of 
$c_l \in \reals^{q_l}, F_l \in \reals^{p_l \times q_l}, G_l \in \reals^{s \times q_l}, \tilde z_l \in \reals^{q_l}$, 
and $H_l \in \reals^{q_l \times q_l}$ i.i.d. from $\mathcal{N}(0,1)$. We then formed $d_l = F_l\tilde z_l + 0.1, Q_l = H_l^TH_l$, 
and $h = \sum_{l=1}^L G_l\tilde z_l$. To evaluate the proximal operators, we constructed problem \eqref{ms_oc_std} 
in CVXPY and solved it using OSQP with the default tolerance.

The results of our experiment are shown in Figure \ref{coupled_qp}. A2DR produces an over ten-fold speedup, 
converging to the desired tolerance of $10^{-6}$ in only $60$ iterations.

% Figure 7
\begin{figure}
        \centering
        \includegraphics[width=10.5cm]{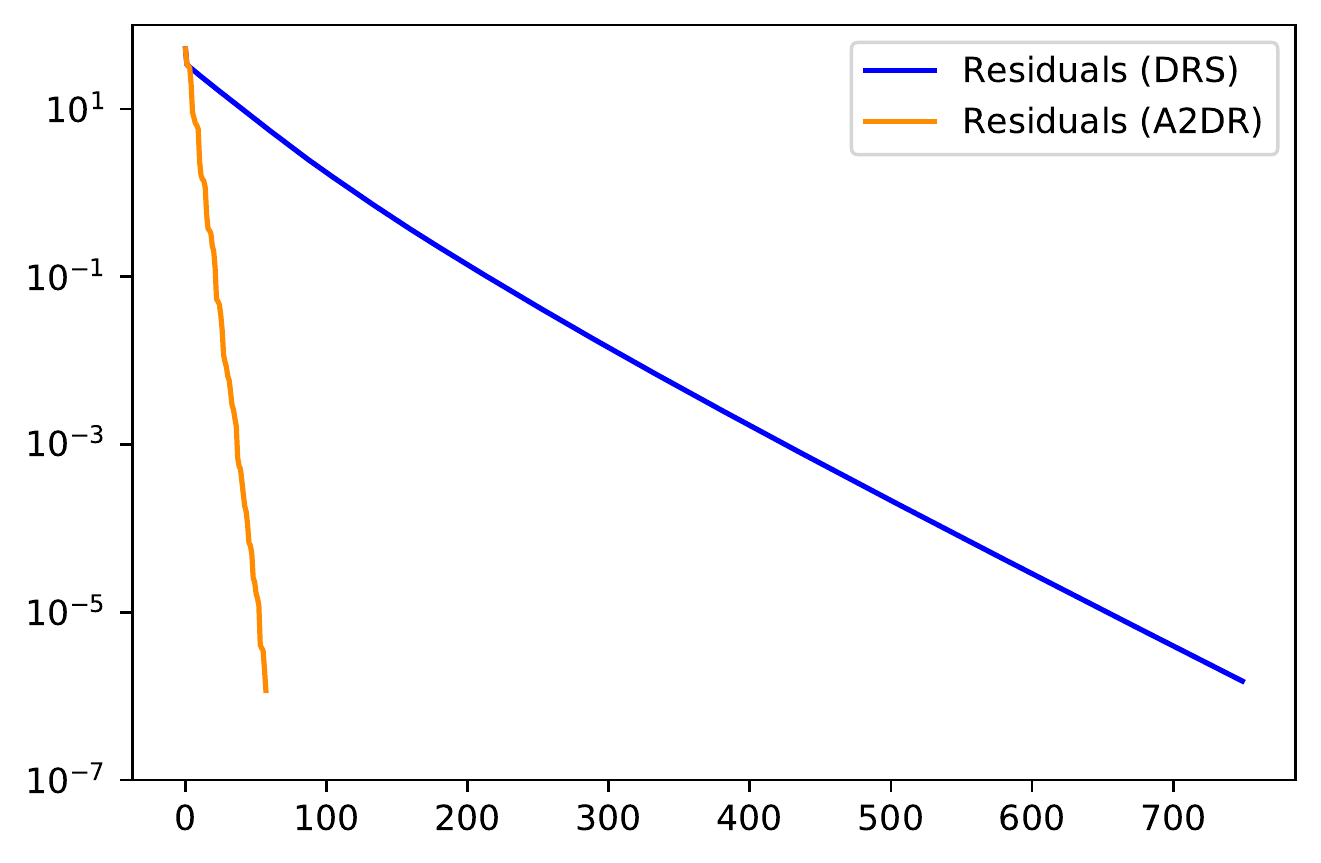}
        \caption{Coupled quadratic program: convergence of residual norms $\|r^k\|_2$.}
        \label{coupled_qp}
\end{figure}

\subsection{Multitask regularized logistic regression} Consider
the following multi-task regression problem:
\BEQ\label{ms_logreg}
\begin{array}{ll}
\mbox{minimize} & \phi(W\theta,Y) + r(\theta)
\end{array}
\EEQ
with variable $\theta=[\theta_1 \cdots \theta_L] 
\in\reals^{s\times L}$. Here $\phi:\reals^{p \times L} 
\times \reals^{p \times L} \rightarrow \reals$ is the loss function, 
$r:\reals^{s \times L} \rightarrow \reals$ is the regularizer, 
$W\in\reals^{p\times s}$ is the feature matrix shared across the 
$L$ tasks, and $Y=[y_1 \cdots y_L]\in\reals^{p\times L}$ contains 
the $p$ class labels for each task $l=1,\ldots,L$. 

We focus on the binary classification problem, so that all entries 
of $Y$ are $\pm 1$. Accordingly, we take our loss function to be 
the logistic loss summed over samples and tasks,
\[
\phi(Z,Y)= \sum_{l=1}^L\sum_{i=1}^p\log\left(1+\exp(-Y_{il}Z_{il})\right),
\]
where $Z \in \reals^{p \times L}$,
and our regularizer to be a linear combination of the group lasso 
penalty \cite{group_lasso} and the nuclear norm,
\[
r(\theta)=\alpha\|\theta\|_{2,1}+\beta\|\theta\|_*,
\]
where $\|\theta\|_{2,1}=\sum_{l=1}^L\|\theta_l\|_2$ and 
$\alpha>0,~\beta>0$ are regularization parameters.

Problem (\ref{ms_logreg}) can be converted to standard form 
\eqref{general} by letting 
\[
f_1(Z) = \phi(Z,Y), \quad f_2(\theta) = \alpha\|\theta\|_{2,1}, 
\quad f_3(\tilde \theta) = \beta\|\tilde \theta\|_*,
\]
\[
A = \left[\begin{array}{cccc}
I & -W & 0 \\
0 & I & -I
\end{array}\right],
\quad x = \left[\begin{array}{c} Z \\ \theta \\ \tilde\theta 
\end{array}\right],
\quad b = 0.
\]
The proximal operator of $f_1$ can be evaluated efficiently via
Newton type methods applied to each component in parallel \cite{logistic_prox}, 
while the proximal operators of the regularization terms have closed-form 
expressions \cite[sections 6.5.4 and 6.7.3]{Proximal}.

\paragraph{Problem instance} We let $p=300$, $s=500$, $L=10$, and $\alpha=\beta=0.1$. 
The entries of $W\in\reals^{p\times s}$ and $\theta^\star\in \reals^{s\times L}$ were drawn 
i.i.d. from $\mathcal{N}(0,1)$. We calculated $Y=\textbf{sign}(W\theta^\star)$, where the signum 
function is applied elementwise with the convention $\textbf{sign}(0)=-1$. To evaluate $\prox_{tf_1}$, 
we used the Newton-CG method from \texttt{scipy.optimize.minimize}, warm starting each iteration with 
the output from the previous iteration. (Further performance improvements may be achieved by 
implementing Newton's method with unit step size and initial point zero for each component in 
parallel \cite{logistic_prox}.)

Figure \ref{multitask_reglog} shows the residual plots for A2DR and DRS. The A2DR curve exhibits a 
steep drop in the first few steps and continues falling until convergence at 500 iterations. 
In contrast, the DRS residual norms never make it below a tolerance of $10^{-2}$.

% Figure 8
\begin{figure}
        \centering
        \includegraphics[width=10.5cm]{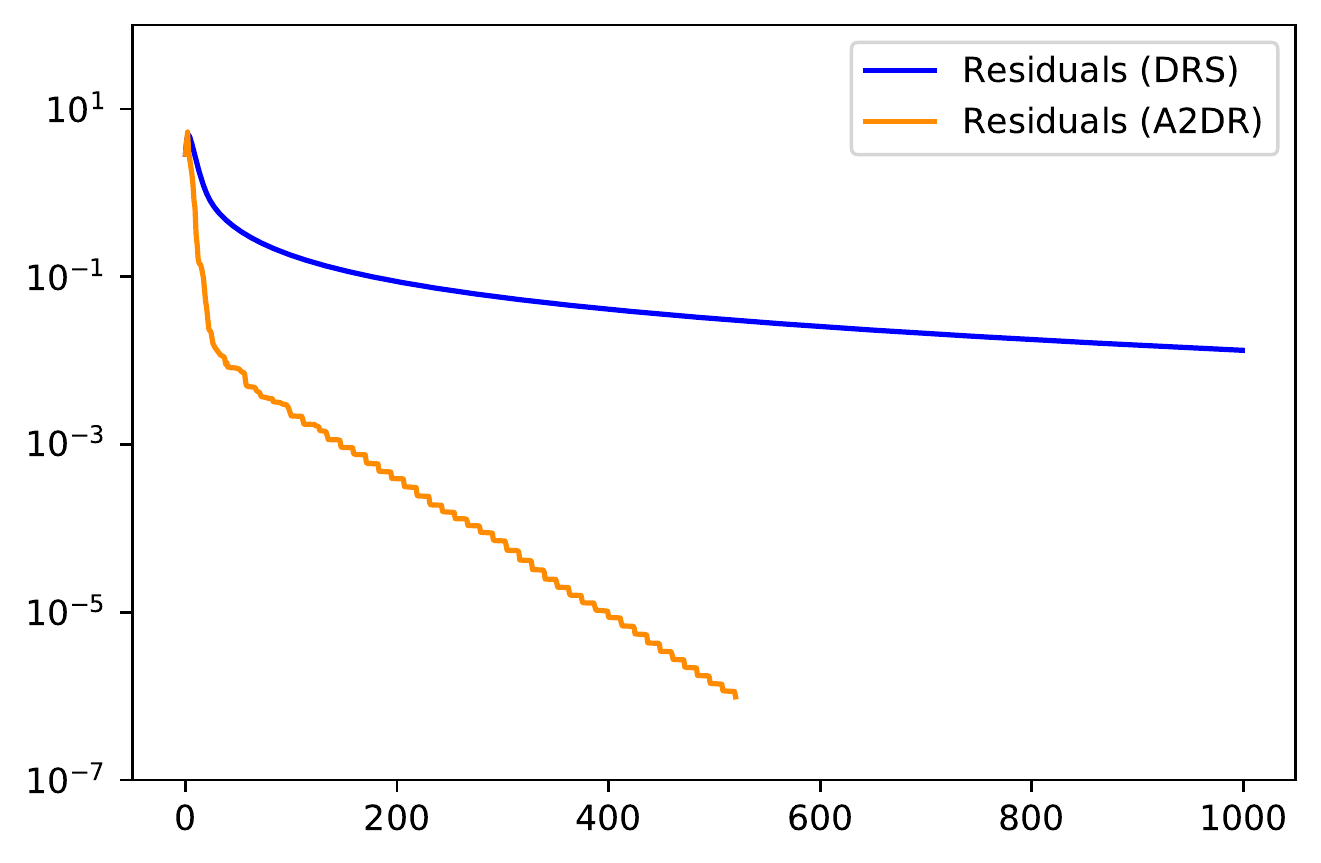}
        \caption{Multitask regularized logistic regression: convergence of residual norms $\|r^k\|_2$.}
        \label{multitask_reglog}
\end{figure}

\section{Conclusions}
\label{conclusions}
We have presented an algorithm for solving linearly constrained convex optimization problems, 
where the objective function is only accessible via its proximal operator. Our algorithm is an 
application of type-II Anderson acceleration to Douglas--Rachford splitting (A2DR). 
Under relatively mild conditions, we prove that A2DR either converges to a global optimum or 
provides a certificate of infeasibility/unboundedness. 
Moreover, when the objective is block separable, its steps partially decouple so that they 
may be computed in parallel, enabling fast distributed implementations. 
We provide one such Python implementation at \url{https://github.com/cvxgrp/a2dr}. 
Using only the default parameters, we show that our solver achieves rapid convergence on a 
wide range of problems, making it a robust choice for general large-scale convex optimization.

In the future, we plan to release a user-friendly interface, which automatically reduces a problem 
to the standard form \eqref{general} input of the A2DR solver, similar to the Epsilon 
system \cite{Epsilon}. This will allow us to integrate A2DR into a high-level 
domain specific language for convex optimization. We also intend to expand the library of proximal 
operators. As problems grow larger, we aim to support more parallel computing architectures, 
allowing users to leverage GPU acceleration and high-performance clusters for distributed optimization.

\section*{Acknowledgment}
The authors would like to thank Brendan O'Donoghue for his advice on preconditioning and 
his inspirational work developing solvers with Anderson acceleration, pioneered by SCS 2.0. 
%%%Anqi Fu and Junzi Zhang were supported by the Stanford Graduate Fellowship.

\newpage
\appendix
%\section*{\noindent \Large Supplementary materials}
\begin{center}
\Large \textbf{Supplementary Materials}\vspace{0.75em}
\end{center}
\addcontentsline{toc}{section}{Appendix}
\renewcommand{\thesubsection}{\Alph{subsection}}

\noindent In this supplementary material, we provide the proofs for the theorems in the main text. 

\section{Preliminaries}
We begin with the following lemma, which establishes the 
connection between residuals of the DRS fixed-point mapping
and the primal/dual residuals of the original problem 
\eqref{general}.

\begin{lemma}\label{fp_prob_res}
	Suppose that $\liminf_{j\rightarrow\infty}\|v^j-F_{\rm DRS}(v^j)\|_2
	\leq \epsilon$ for some $\epsilon\geq 0$. Then 
	\begin{equation}
	\liminf_{j\rightarrow\infty}\|r_{\rm prim}^j\|_2\leq 
	\|A\|_2 \epsilon, \quad \liminf_{j\rightarrow\infty}
	\|r_{\rm dual}^j\|_2\leq \frac{1}{t}\epsilon.
	\end{equation}
\end{lemma}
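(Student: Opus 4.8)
The plan is to express both residuals directly in terms of the fixed-point residual $G_{\rm DRS}(v^j)=v^j-F_{\rm DRS}(v^j)$ and then pass to the inferior limit. The crucial preliminary identity is that this residual equals the gap between the two primal iterates produced in one DRS sweep: from the definition $F_{\rm DRS}(v)=v+\Pi(2\prox_{tf}(v)-v)-\prox_{tf}(v)$ together with lines 3 and 5 of Algorithm \ref{DRS_alg}, one reads off $v^j-F_{\rm DRS}(v^j)=\prox_{tf}(v^j)-\Pi(v^{j+1/2})=x^{j+1/2}-x^{j+1}$. Everything then reduces to the pointwise bounds $\|r_{\rm prim}^j\|_2\le\|A\|_2\|x^{j+1/2}-x^{j+1}\|_2$ and $\|r_{\rm dual}^j\|_2\le\tfrac1t\|x^{j+1/2}-x^{j+1}\|_2$, after which the two conclusions follow at once.

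For the primal residual this is immediate. Since $x^{j+1}=\Pi(v^{j+1/2})$ is the projection onto the affine set $\{x\,:\,Ax=b\}$, it is feasible, i.e.\ $Ax^{j+1}=b$. Substituting into (\ref{primal_k}) gives $r_{\rm prim}^j=Ax^{j+1/2}-b=A(x^{j+1/2}-x^{j+1})$, and taking norms with $\|Aw\|_2\le\|A\|_2\|w\|_2$ yields the claimed bound.

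The dual residual is the delicate part, and I expect it to be the main obstacle, because $\lambda^j$ is the least-squares minimizer (\ref{ls_lam}). Geometrically, minimizing $\|\tfrac1t(v^j-x^{j+1/2})+A^T\lambda\|_2$ over $\lambda$ computes the distance from $\tfrac1t(v^j-x^{j+1/2})$ to the subspace $\range(A^T)$, i.e.\ the norm of its orthogonal projection onto $\ker A=\range(A^T)^\perp$. So I would write $\|r_{\rm dual}^j\|_2=\tfrac1t\|P_{\ker A}(v^j-x^{j+1/2})\|_2$, where $P_{\ker A}$ is the orthogonal projector onto $\ker A$. Next I use the splitting step $v^{j+1/2}=2x^{j+1/2}-v^j$ to rewrite $v^j-x^{j+1/2}=x^{j+1/2}-v^{j+1/2}=(x^{j+1/2}-x^{j+1})-(v^{j+1/2}-x^{j+1})$. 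The key structural fact is that the projection remainder $v^{j+1/2}-x^{j+1}=A^T(AA^T)^\dagger(Av^{j+1/2}-b)$ lies in $\range(A^T)$, so $P_{\ker A}$ annihilates it, leaving $P_{\ker A}(v^j-x^{j+1/2})=P_{\ker A}(x^{j+1/2}-x^{j+1})$; since an orthogonal projector is nonexpansive, $\|r_{\rm dual}^j\|_2\le\tfrac1t\|x^{j+1/2}-x^{j+1}\|_2$.

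Finally, having the two pointwise inequalities $\|r_{\rm prim}^j\|_2\le\|A\|_2\|G_{\rm DRS}(v^j)\|_2$ and $\|r_{\rm dual}^j\|_2\le\tfrac1t\|G_{\rm DRS}(v^j)\|_2$ for every $j$, I pass to a subsequence realizing $\liminf_j\|G_{\rm DRS}(v^j)\|_2\le\epsilon$. Monotonicity of $\liminf$ under termwise domination, together with its homogeneity under the nonnegative constants $\|A\|_2$ and $\tfrac1t$, then gives $\liminf_j\|r_{\rm prim}^j\|_2\le\|A\|_2\epsilon$ and $\liminf_j\|r_{\rm dual}^j\|_2\le\tfrac1t\epsilon$, completing the proof.
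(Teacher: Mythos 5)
Your proof is correct and follows essentially the same route as the paper's: both reduce everything to the pointwise identity $v^j-F_{\rm DRS}(v^j)=x^{j+1/2}-x^{j+1}$, dispatch the primal residual via $Ax^{j+1}=b$, and establish the pointwise dual bound $\|r_{\rm dual}^j\|_2\le\tfrac1t\|x^{j+1/2}-x^{j+1}\|_2$ before taking inferior limits. The only difference is presentational: where you evaluate the minimized dual residual as $\tfrac1t\|P_{\ker A}(v^j-x^{j+1/2})\|_2$ and annihilate the $\range(A^T)$ component $v^{j+1/2}-x^{j+1}=A^T(AA^T)^{\dagger}(Av^{j+1/2}-b)$ by orthogonal projection, the paper carries out the identical decomposition algebraically and substitutes the explicit candidate $\bar\lambda^j=\tfrac1t(AA^T)^{\dagger}(Av^{j+1/2}-b)$ into the least-squares objective.
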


\begin{proof} 
	By expanding $F_{\rm DRS}$, and in particular line 6 of 
	Algorithm \ref{DRS_alg}, we see that 
	\[
	\liminf_{j\rightarrow\infty}\|x^{j+1/2}-x^{j+1}\|_2=
	\liminf_{j\rightarrow\infty}\|v^j-v_{\rm DRS}^{j+1}\|_2
	\leq \epsilon.
	\]
	
	Since $Ax^{j+1}=b$ by the projection step in $F_{\rm DRS}$, 
	we have 
	\[
	r_{\rm prim}^j=Ax^{j+1/2}-b=A(x^{j+1/2}-x^{j+1}),
	\] 
	which implies that
	\[
	\liminf_{j\rightarrow\infty}\|r_{\rm prim}^j\|_2 \leq 
	\|A\|_2\liminf_{j\rightarrow\infty}\|x^{j+1/2}-x^{j+1}\|_2
	\leq \|A\|_2\epsilon,
	\]
	and hence $\liminf_{j\rightarrow\infty}\|r_{\rm prim}^j\|_2
	\leq \|A\|_2\epsilon$. 
	
	On the other hand, the optimality conditions from lines 3 
	and 5 of Algorithm \ref{DRS_alg} give us
	\[
	\frac{1}{t}(x^{j+1/2}-v^j)+g^j=0,\quad x^{j+1}=v^{j+1/2}-A^T
	\tilde{\lambda}^j,
	\]
	for some $g^j\in\partial f(x^{j+1/2})$ and $\tilde{\lambda}^j
	=(AA^T)^{\dagger}(Av^{j+1/2}-b)$. Thus, 
	\BEQ
	\begin{split}
		g^j&=\frac{1}{t}(v^j-x^{j+1/2})\\
		&=\frac{1}{t}(v^{j+1/2}-x^{j+1})+\frac{1}{t}(v^j-v^{j+1/2})+
		\frac{1}{t}(x^{j+1}-x^{j+1/2})\\
		&=\frac{1}{t} A^T\tilde{\lambda}^j+2\frac{1}{t}(v^j-x^{j+1/2})+
		\frac{1}{t}(x^{j+1}-x^{j+1/2})\\
		&=\frac{1}{t} A^T\tilde{\lambda}^j+2g^j+
		\frac{1}{t}(x^{j+1}-x^{j+1/2}),
	\end{split}
	\EEQ
	where we have used line 4 of Algorithm \ref{DRS_alg} in the third equality. Rearranging
	terms yields $g^j=A^T(-\frac{1}{t}\tilde{\lambda}^j)+
	\frac{1}{t}(x^{j+1/2}-x^{j+1})$.

	Finally, since we compute $r_{\rm dual}^j=g^j+A^T\lambda^j$
	using $\lambda^j\in \argmin_{\lambda}\|g^j+A^T\lambda\|_2$ 
	(c.f. residuals and dual variables in \S \ref{DRS}),
	\[
	\liminf_{j\rightarrow\infty}\|r_{\rm dual}^j\|_2\leq 
	\liminf_{j\rightarrow\infty}\|g^j+A^T\bar{\lambda}^j\|_2
	=\frac{1}{t}\liminf_{j\rightarrow\infty}\|x^{j+1/2}-x^{j+1}\|_2\leq
	\frac{1}{t}\epsilon,
	\]
	where $\bar{\lambda}^j=\frac{1}{t}\tilde{\lambda}^j$. This completes
	our proof.
\end{proof}

\begin{remark}\label{rmk_app}
	When $\epsilon=0$, Lemma \ref{fp_prob_res} implies that
	\[
	\liminf_{j\rightarrow\infty}\|r_{\rm prim}^j\|_2=
	\liminf_{j\rightarrow\infty}\|r_{\rm dual}^j\|_2=0.
	\]
	Furthermore, notice that we could have calculated 
	$r_{\rm dual}^j$ using 
	\[
	\lambda^j = \bar{\lambda}^j = \frac{1}{t}(AA^T)^{\dagger}(Av^{k+1/2}-b),
	\]
	and the results would still hold.
\end{remark}

\section{Proof of Theorems \ref{global_conv_well} and 
	\ref{global_conv_path}}

We now prove the convergence results in the error-free 
setting.
Define the infimal displacement vector of $F_{\rm DRS}$ as 
$\delta v^\star = \Pi_{\overline{\range(I-F_{\rm DRS})}}(0)$.  
It follows directly that $\|\delta v^\star\|_2=
\inf_{v\in\mathbf{R}^n}~\|v-F_{\rm DRS}(v)\|_2$. We will later 
show that in A2DR, $\lim_{k\rightarrow\infty}v^k-v^{k+1}
=\delta v^\star$. In particular, Theorem \ref{global_conv_path} gives us 
$\delta v=\delta v^\star$.

We begin by showing that $\delta v^\star=0$ if and only if problem
\eqref{general} is solvable. To see this, first notice that by 
\cite[Corollary 6.5]{range_DRS},  
\[
\delta v^\star=\argmin\nolimits_{z\in \mathcal{Z}} \|z\|_2,
\]
where 
\[
\mathcal{Z}=\overline{\dom f -\dom g}\cap 
t(\overline{\dom f^* +\dom g^*}),\quad g(x)
=\mathcal{I}_{\{v\,:\,Av=b\}}(x).
\]
Since $\dom g=\{x\,:\,Ax=b\}$ and 
$\dom g^*=\range(A^T)=-\range(A^T)$, the problem is 
solvable if and only if 
\[
\dist(\dom f, \dom g)=\dist(\dom f^*, -\dom g^*)=0,
\] which holds if and only if $0\in\overline{\dom f 
	-\dom g}$ and $0\in \overline{\dom f^*+\dom g^*}$, \ie, 
$\delta v^\star=0$. 

Below we denote the initial iteration counts 
for accepting AA candidates as $k_i$ (\ie, when 
$I_{\rm safeguard}$ is \texttt{True} or $R_{\rm AA}\geq R$, 
and the check in Algorithm \ref{A2DR_alg}, line 14 passes), and the 
iteration counts 
for accepting DRS candidates as $l_i$. Notice that for each 
iteration $k$, either $k=k_i+K$ for some $i$ and 
$0\leq K\leq R-1$, or $k=l_i$ for some $i$. 
\begin{itemize}
	\item \textbf{Case (i) [Theorem \ref{global_conv_well}, 
		\eqref{liminf}]} 
	
	First, suppose that problem \eqref{general} is solvable. 
	Then, $\delta v^\star=0$. By Lemma \ref{fp_prob_res}, 
	to prove \eqref{liminf}, it suffices to prove that 
	$\liminf_{k\rightarrow\infty}\|g^k\|_2=0$. If the set of $k_i$ is infinite, \ie, 
	the AA candidate 
	is adopted an infinite number of times, then 
	\[
	0\leq \liminf_{k\rightarrow\infty}\|g^k\|_2\leq 
	\liminf_{i\rightarrow\infty}\|g^{k_i}\|_2\leq D\|g^0\|_2
	\lim_{i\rightarrow\infty}(i+1)^{-(1+\epsilon)}=0.
	\]
	Here we used the fact that $n_{AA}/R=i$ in iteration $k_i$. 
	
	On the other hand, if the set of $k_i$ is finite, Algorithm \ref{A2DR_alg} 
	reduces to the vanilla DRS algorithm after a finite 
	number of iterations. By \cite[Theorem 2]{drs_classic}, this means that 
	$\lim_{k\rightarrow\infty} g^k= \lim_{k\rightarrow\infty} v^k-v^{k+1} = \delta v^\star= 0$.
	Thus, we always have  
	$\liminf_{k\rightarrow\infty}\|g^k\|_2=0$, and this fact coupled with 
	Lemma \ref{fp_prob_res} immediately gives us \eqref{liminf}.
	
	Notice that the case of finite $k_i$'s cannot actually happen. 
	Otherwise, since 
	$\lim_{k\rightarrow\infty}\|g^k\|_2=0$ and $n_{AA}$ is upper bounded 
	(because AA candidates are rejected after some point), the check 
	on line 14 of Algorithm \ref{A2DR_alg} must pass eventually. 
	This means that an AA candidate is accepted one more time, 
	which is a contradiction. Hence it must be that AA candidates 
	are adopted an infinite number of times.

	\item \textbf{Case (ii) [Theorem \ref{global_conv_well}, 
		iteration convergence]}
	
	Now suppose that $F_{\rm DRS}$ has a fixed point. As $G_{\rm DRS}$ 
	is non-expansive, if the AA candidate is adopted in iteration $k$,
	\begin{equation*}
	\begin{split}
	\|g^{k+1}\|_2&=\|G_{\rm DRS}(v^{k+1})\|_2\leq 
	\|G_{\rm DRS}(v^{k+1})-G_{\rm DRS}(v^k)\|_2+\|G_{\rm DRS}(v^k)\|_2\\
	&\leq (\|H_k\|_2+1)\|g^k\|_2\leq 2(1+1/\eta)\|g^k\|_2,
	\end{split}
	\end{equation*}
	where we have used Lemma \ref{Hkbound} to bound $\|H_k\|_2$. 
	This immediately implies that for any $0\leq K\leq R-1$, 
	\begin{equation}\label{gk-bound}
	\|g^{k_i+K}\|_2\leq (2+2/\eta)^K\|g^{k_i}\|_2\leq 
	D\|g^0\|_2(2+2/\eta)^K(i+1)^{-(1+\epsilon)},
	\end{equation}
	and so we have $\lim_{i\rightarrow\infty}\|g^{k_i+K}\|_2=0$. 
	
	In addition, since AA candidates are accepted in all iterations 
	$k_i+K$, again by Lemma \ref{Hkbound}, 
	we have that for any $w\in\reals^n$,
	\begin{equation}\label{AA-cand}
	\begin{split}
	\|v^{k_i+K+1}-w\|_2&\leq \|v^{k_i+K}-w\|_2+(1+2/\eta)
	\|g^{k_i+K}\|_2\\
	&\leq \cdots\leq \|v^{k_i}-w\|_2+(1+2/\eta)\sum_{j=0}^K
	\|g^{k_i+j}\|_2\\
	&\leq \|v^{k_i}-w\|_2+(1+2/\eta)\|g^{k_i}\|_2
	\sum_{j=0}^K (2+2/\eta)^j\\
	&\leq \|v^{k_i}-w\|_2+(1+2/\eta)C_RD
	\|g^0\|_2(i+1)^{-(1+\epsilon)},
	\end{split}
	\end{equation}
	where $C_R=\sum_{j=0}^{R-1} (2+2/\eta)^j$ is a constant.

	Now let $v^\star$ be a fixed point of $F_{\rm DRS}$. 
	Since $F_{\rm DRS}$ is $1/2$-averaged, 
	by inequality (5) in \cite{MonoPrimer},
	\begin{equation}\label{DRS-cand2}
	\|v^{l_i+1}-v^\star\|_2^2\leq \|v^{l_i}-v^\star\|_2^2-
	\|g^{l_i}\|_2^2\leq \|v^{l_i}-v^\star\|_2^2
	\end{equation}
	for any $i\geq 0$. Hence for any $k\geq 0$,
	\[
	\|v^k-v^\star\|_2\leq \|v^0-v^\star\|_2 + (1+2/\eta)
	C_RD\|g^0\|_2
	\sum\nolimits_{i=0}^{\infty}(i+1)^{-(1+\epsilon)}=E<\infty,
	\]
	implying that $\|v^k-v^\star\|_2$ is bounded.
	
	% ANQI: I don't follow this step. Can you add 1-2 lines of mathematical details?
	% Perhaps replace "const" with some (unused) letter to represent the constant.
	As a result, by squaring both sides of \eqref{AA-cand} and 
	combining with \eqref{DRS-cand2}, we get that 
	\[
	\sum_{i=0}^{\infty}\|g^{l_i}\|_2^2\leq \|v^0-v^\star\|_2^2
	+\text{const},
	\]
	where 
	\begin{equation*}
	\begin{split}
	\text{const}=&\left((1+2/\eta)C_RD
	\|g^0\|_2\right)^2\sum_{i=0}^{\infty}(i+1)^{-(2+2\epsilon)}\\
	&+(2+4/\eta)C_RDE\|g^0\|_2\sum_{i=0}^{\infty}
	(i+1)^{-(1+\epsilon)}<\infty.
	\end{split}
	\end{equation*}
	Thus, $\lim_{i\rightarrow\infty}\|g^{l_i}\|_2=0$. Together 
	with the fact that $\lim_{i\rightarrow\infty}\|g^{k_i+K}\|_2=0$ 
	for $0\leq K\leq R-1$, we immediately obtain $\lim_{k\rightarrow\infty}
	\|g^k\|_2=0$, and an application of Lemma \ref{fp_prob_res} yields \eqref{liminf}.
	
	Notice that in our derivation, we implicitly assumed both index sets are infinite. 
	The set of $k_i$ is always infinite by the same logic as in case (i).
	Moreover, if the set of $l_i$ is finite, the arguments above involving $l_i$ can be ignored, 
	as eventually $k = k_i + K$ for all $i$ above some threshold.
	
	% Notice that in the above derivation, we have implicitly 
	% assumed that both the sets of $l_i$ and $k_i$ are infinite. 
	% However, the cases when either of them is finite is even 
	% simpler as one can completely ignore the finite index set. 
	% Also notice that the same argument as in case (i) can be 
	% applied here to show that the set of $k_i$ cannot be 
	% finite, \ie, the AA candidates must be adopted infinitely often.
	
	It still remains to be shown that $v^k$ converges to a fixed-point of $F_{\rm DRS}$. To do this, we first show that $\|v^k - v^\star\|_2$ is quasi-Fej\'erian. Squaring both sides of the first inequality in \eqref{AA-cand}
	and combining it with \eqref{gk-bound} and \eqref{DRS-cand2}, we get that for any $k\geq 0$, 
	% ANQI: Please elaborate on the mathematical steps a bit here.
	\begin{equation}\label{quasi-fejer}
	\|v^{k+1}-v^\star\|_2^2\leq \|v^k-v^\star\|_2^2+\epsilon^k,
	\end{equation}
	where $\epsilon^{l_i}=0$ and 
	\begin{equation*}
	\begin{split}
	\epsilon^{k_i+K}=&2DE\|g^0\|_2(1+2/\eta)(2+2/\eta
	)^K(i+1)^{-(1+\epsilon)} \\
	&+ \left(D\|g^0\|_2(1+2/\eta)\right)^2(2+2/\eta
	)^{2K}(i+1)^{-(2+2\epsilon)}
	\end{split}
	\end{equation*}
	for $0\leq K\leq R-1$. Hence $\epsilon^k\geq 0$ and 
	$\sum_{k=0}^{\infty}\epsilon^k<\infty$. In other words, 
	$\|v^k-v^\star\|_2$ is quasi-Fej\'erian.
	% Here recall that $k$ either equals some $l_i$ or some 
	% $k_i+K$ for $0\leq K\leq M-1$.
	
	Since $\lim_{k\rightarrow\infty} \|g^k\|_2 = 0$ and inequality \eqref{quasi-fejer} holds,
	we can invoke \cite[Theorem 3.8]{Fejer} to conclude that $\lim_{k\rightarrow\infty} \|v^k-v^\star\|_2$ exists
	and $v^k$ converges to some fixed-point of $F_{\rm DRS}$ (not necessarily $v^\star$).
	The convergence of $x^{k+1/2}$ to a solution of \eqref{general} follows directly from 
	the continuity of the proximal operators.
	
	% Since $\lim_{k\rightarrow\infty} \|g^k\|_2 = 0$ and the 
	% quasi-Fej\'erian inequality \eqref{quasi-fejer} holds, 
	% the convergence of $\|v^k-v^\star\|_2$ as well as the 
	% convergence of $v^k$ to some fixed-point of $F_{\rm DRS}$ 
	% (not necessarily $v^\star$) follows directly from \cite[Theorem 3.8]{Fejer}. The convergence 
	% of $x^{k+1/2}$ to a solution of \eqref{general} is 
	% implied by the continuity of the proximal operators.
	
	\item \textbf{Case (iii) [Theorem \ref{global_conv_path}]} 
	
	Now suppose that problem \eqref{general} is pathological, 
	then $\delta v^\star\neq 0$. Since 
	\[
	\|\delta v^\star\|_2=\inf_{v\in\mathbf{R}^n}~\|v-F_{\rm DRS}(v)\|_2,
	\]
	the safeguard will always be invoked
	for sufficiently large iteration $k$ because $\|g^k\|_2 \geq \|\delta v^\star\|_2>0$. 
	Hence the algorithm reduces to vanilla DRS in the end. 
	We can thus prove the result in case (iii) by appealing to previous work on vanilla DRS
	\cite{drs_classic, range_DRS, DRS_pathology}.
	% So case (iii) can be proved as if we are dealing with the 
	% vanilla DRS algorithm, and we can thusly utilize the 
	% previous results for vanilla DRS in \cite{drs_classic,
	% range_DRS, DRS_pathology}. But the corresponding result 
	% is still new to our knowledge, especially in terms of
	% distinguishing the cases of unboundedness and infeasibility.
	
	Recall that $\lim_{k\rightarrow\infty} v^k-v^{k+1} = \delta 
	v^\star\neq 0$ \cite[Theorem 2]{drs_classic}. First, we will show that problem 
	\eqref{general} is dual strongly infeasible if and only if 
	\[
	\lim_{k\rightarrow\infty}Ax^{k+1/2}=b.
	\]
	If the problem is dual strongly infeasible, then by \cite[Lemma 1]{DRS_pathology}, 
	it is primal feasible and has an improving direction $d = -\frac{1}{t}\delta v^\star$ 
	\cite[Corollary 3]{DRS_pathology}. Along this direction, 
	both $f$ and $g=\mathcal{I}_{\{x\,:\,Ax=b\}}$ remain 
	feasible, and in particular, $A\delta v^\star=0$. Hence 
	\[
	\lim_{k\rightarrow\infty}Ax^{k+1/2}-Ax^{k+1}=
	\lim_{k\rightarrow\infty}A(v^k-v^{k+1})=A\delta v^\star=0,
	\]
	which implies that $\lim_{k\rightarrow\infty}Ax^{k+1/2}=b$ 
	since $Ax^{k+1}=b$ for all $k\geq 0$.
	
	Conversely, if $\lim_{k\rightarrow\infty}Ax^{k+1/2}=b$, 
	then $\dist(\dom f, \dom g)=0$ because $x^{k+1/2}\in \dom f$. 
	This implies problem \eqref{general} is not 
	primal strongly infeasible, so it must be dual strongly 
	infeasible since we assumed the problem is pathological.
	
	Hence if $\lim_{k\rightarrow\infty}Ax^{k+1/2}=b$, 
	problem \eqref{general} is dual strongly infeasible, and by 
	\cite[Lemma 1 and Corollary 3]{DRS_pathology}, it is unbounded and 
	\[
	\delta v^\star=t\Pi_{\overline{\dom f^* +\dom g^*}}(0),
	\]
	which implies that 
	\[
	\|\delta v^\star\|_2=t\dist(\dom f^*, \range(A^T)).
	\]
	Otherwise, the problem is not dual strongly infeasible
	and thus must be primal strongly infeasible by our assumption of pathology, 
	so from \cite[Corollary 6.5]{range_DRS}, 
	\[
	\|\delta v\|_2\geq \dist(\dom f,\{x\,:\,Ax=b\}).
	\] 
	When the dual problem is feasible, 
	$\delta v^\star=\Pi_{\overline{\dom f-\dom g}}(0)$ \cite[Corollary 5]{DRS_pathology}, 
	which implies that 
	\[
	\|\delta v^\star\|_2=\dist(\dom f, \{x\,:\,Ax=b\}).
	\]
\end{itemize}

\section{Proof of Theorem 
	\ref{global_conv_well_approx}}

% We finally give the proof for the convergence results 
% in the error-corrupted settings. 
The proof resembles that
of Theorem \ref{global_conv_well} (with identical notation), so here we mainly 
highlight the differences caused by the 
computational errors $\eta_1^k,~\eta_2^k$. We begin by bounding the difference between the 
error-corrupted fixed-point mapping, denoted by $\hat{F}_{\rm DRS}$, and the error-free 
mapping $F_{\rm DRS}$. Starting from any $v^k\in\reals^n$, we have by definition
\[
\|\hat{v}^{k+1/2}-v^{k+1/2}\|_2=2\|\hat{x}^{k+1/2}
-x^{k+1/2}\|_2=2\|\eta_1^k\|_2,
\]
\[
\|\hat{x}^{k+1}-x^{k+1}\|_2\leq \|\hat{v}^{k+1/2}
-v^{k+1/2}\|_2+\|\eta_2^k\|_2=2\|\eta_1^k\|_2+
\|\eta_2^k\|_2,
\]
where the inequality comes from the non-expansiveness of $\Pi$. Let $\hat{G}_{\rm DRS}(v) = v - \hat F_{\rm DRS}(v)$. 
Since $\|\eta_1^k\|_2 \leq \epsilon'$ and $\|\eta_2^k\|_2 \leq \epsilon'$,
%\begin{align*}
\begin{equation*}
\begin{split}
\|g^k-G_{\rm DRS}(v^k)\|_2 &= \|\hat{G}_{\rm DRS}(v^k)-G_{\rm DRS}(v^k)\|_2 \\
&= \|\hat{F}_{\rm DRS}(v^k)-F_{\rm DRS}(v^k)\|_2 \\
&\leq \|\hat{x}^{k+1}-x^{k+1}\|_2+\|\hat{x}^{k+1/2}- x^{k+1/2}\|_2 \\
&\leq 3\|\eta_1^k\|_2+\|\eta_2^k\|_2 \leq 4\epsilon'.
\end{split}
\end{equation*}
%\end{align*}
Thus, by Lemma \ref{fp_prob_res}, it suffices to prove that 
$\liminf_{k\rightarrow\infty} \|G_{\rm DRS}(v^k)\|_2\leq 4\epsilon'
+4\sqrt{L\epsilon'}$.

% Hence
% \[
% \begin{split}
% \|\hat{F}_{\rm DRS}(v^k)-F_{\rm DRS}(v^k)\|_2\leq 
% \|\hat{x}^{k+1}-x^{k+1}\|_2+\|\hat{x}^{k+1/2}-
% x^{k+1/2}\|_2\leq 3\|\eta_1^k\|_2+\|\eta_2^k\|_2.
% \end{split}
% \]

% Below we use the same notation ($k_i,~l_i$, etc.) as in 
% the proof of Theorem \ref{global_conv_well}. 
% Then since $\|\eta_1^k\|_2,~\|\eta_2^k\|_2\leq \epsilon'$,
% we have that $\|\hat{F}_{\rm DRS}(v)-F_{\rm DRS}(v)\|_2
% \leq 4\epsilon'$ for any $v\in \reals^n$, and hence 
% \[
% \|g_k-G_{\rm DRS}(v^k)\|_2=\|\hat{G}_{\rm DRS}(v^k)-G_{\rm 
% DRS}(v^k)\|_2=\|\hat{F}_{\rm DRS}(v^k)-F_{\rm DRS}(v^k)\|_2\leq 4\epsilon',
% \] where $\hat{G}_{\rm DRS}(v)=v-\hat{F}_{\rm DRS}(v)$.

% By Lemma \ref{fp_prob_res}, it suffices to prove that 
% $\liminf_{k\rightarrow\infty} \|G_{\rm DRS}(v^k)\|_2\leq 4\epsilon'
% +4\sqrt{R\epsilon'}$.

On the one hand, if the set of $k_i$ (AA candidates) is infinite,
\begin{equation*}
\begin{split}
\liminf_{k\rightarrow\infty}\|G_{\rm DRS}(v^k)\|_2&\leq 
\liminf_{i\rightarrow\infty}\|G_{\rm DRS}(v^{k_i})\|_2\leq 
\liminf_{i\rightarrow\infty}\|g^{k_i}\|_2+4\epsilon'\\
&\leq D\|g^0\|_2\lim_{i\rightarrow\infty}(i+1)^{-(1+\epsilon)}
+4\epsilon'=4\epsilon'.
\end{split}
\end{equation*}
Otherwise, the set of $k_i$ is finite, and the algorithm 
reduces to vanilla DRS after a finite number of 
iterations. Without loss of generality, suppose we start 
running the error-corrupted vanilla DRS algorithm from the 
first iteration. 

Let $v^\star$ be a fixed-point of $F_{\rm DRS}$. 
By inequality (5) in \cite{MonoPrimer},
% ANQI: It's not clear to me how you used inequality (5). Can you please expand the derivation?
\begin{equation}\label{DRS-cand3}
\begin{split}
\|v^{k+1}-v^\star\|_2^2&\leq \left(\|\hat{F}_{\rm DRS}(v^k)-
F_{\rm DRS}(v^k)\|_2+\|F_{\rm DRS}(v^k)-v^\star\|_2\right)^2\\
&\leq 16(\epsilon')^2+8\epsilon'\|v^k-v^\star\|_2+\|F_{\rm DRS}(v^k)
-v^\star\|_2^2\\
&\leq 16(\epsilon')^2+16L\epsilon'+ \|v^k-v^\star\|_2^2-
\|G_{\rm DRS}(v^k)\|_2^2
\end{split}
\end{equation}
for all $k \geq 0$, where in the second step, we use the fact that 
$\|\hat{F}_{\rm DRS}(v^k) - F_{\rm DRS}(v^k)\|_2 \leq 4 \epsilon'$ and $F_{\rm DRS}$ is non-expansive, and in the third step, we employ $\|v^k\|_2\leq L$ and $\|v^\star\|_2\leq L$ along with the triangle inequality.
Rearranging terms and telescoping the inequalities,
\[
\dfrac{1}{K}\sum_{k=0}^{K-1}\|G_{\rm DRS}(v^k)\|_2^2\leq 
\dfrac{1}{K}\|v^0-v^\star\|_2^2+16(\epsilon')^2+16 L\epsilon',
\]
which immediately implies that 
\[
\liminf_{k\rightarrow\infty}\|G_{\rm DRS}(v^k)\|_2\leq \sqrt{16(\epsilon')^2+16 L\epsilon'} \leq 4\epsilon' 
+4\sqrt{L\epsilon'}.
\]
Together with Lemma \ref{fp_prob_res}, this completes the proof.

\end{document}